\newtheorem{definition}{Definition}[section]
\newtheorem{claim}{Claim}
\newtheorem{proposition}[definition]{Proposition}
\newtheorem{theorem}[definition]{Theorem}
\newtheorem{corollary}[definition]{Corollary}
\newtheorem{lemma}[definition]{Lemma}
\newtheorem{question}[definition]{Question}
\numberwithin{equation}{section}
\newcommand{\comment}[1]{}
\newcommand{\cS}{\mathcal{S}}
\newcommand{\cF}{\mathcal{F}}
\newcommand{\cT}{\mathcal{T}}
\newcommand{\cH}{\mathcal{H}}
\newcommand{\cL}{\mathcal{L}}
\newcommand{\bG}{\mathbf{G}}
\newcommand{\den}{{\rm den}}
\renewcommand{\epsilon}{\varepsilon}
\newcommand{\COMMENT}[1]{}
\renewcommand{\COMMENT}{\footnote} 
\title{Tree decompositions of graphs without large bipartite holes}
\author{Jaehoon Kim, Younjin Kim and Hong Liu}
\thanks{J.K.\ was supported by ERC grant~306349; Y.K.\ was supported by Basic Science Research Program through the National Research Foundation of Korea(NRF) funded by the Ministry of Education (2017R1A6A3A04005963); and H.L.\ was supported by the Leverhulme Trust Early Career Fellowship~ECF-2016-523.}
\date{\today}
\begin{document}

\date{}

\restylefloat{figure} 
\maketitle

\begin{abstract}
A recent result of Condon, Kim, K\"{u}hn and Osthus implies that for any $r\geq (\frac{1}{2}+o(1))n$, an $n$-vertex almost $r$-regular graph $G$ has an approximate decomposition into any collections of $n$-vertex bounded degree trees. In this paper, we prove that a similar result holds for an almost $\alpha n$-regular graph $G$ with any $\alpha>0$ and a collection of bounded degree trees on at most $(1-o(1))n$ vertices if $G$ does not contain large bipartite holes. This result is sharp in the sense that it is necessary to exclude large bipartite holes and we cannot hope for an approximate decomposition into $n$-vertex trees.

  Moreover, this implies that for any $\alpha>0$ and an $n$-vertex almost $\alpha n$-regular graph $G$, with high probability, the randomly perturbed graph $G\cup \mathbf{G}(n,O(\frac{1}{n}))$ has an approximate decomposition into all collections of bounded degree trees of size at most $(1-o(1))n$ simultaneously.
This is the first result considering an approximate decomposition problem in the context of Ramsey-Tur\'an theory and the randomly perturbed graph model.
\end{abstract}

\section{Introduction}

Finding sufficient conditions for the existence of a subgraph of $G$ isomorphic to a specific graph $H$ is a central theme in extremal graph theory. The earliest results of this type are Mantel's theorem~\cite{Man07} and Tur\'an's theorem~\cite{Tur41} stating that an $n$-vertex graph $G$ contains a complete graph $K_r$ on $r$ vertices whenever $G$ contains at least $(1-\frac{1}{r-1})\binom{n}{2}$ edges. Erd\H{o}s-Stone-Simonovits theorem~\cite{ES66,ES46} further generalises this into any small graph $H$.

On the other hand, the nature of problems changes if we consider a `large' graph $H$ whose number of vertices is comparable (or equal) to that of $G$. One important cornerstone in this direction is Dirac's theorem~\cite{Dir52} which shows that whenever we have $\delta(G)\geq \frac{n}{2}$, the $n$-vertex graph $G$ contains a Hamilton cycle.
Koml\'os, S\'ark\"ozy and  Szemer\'edi~\cite{KSS95} proved that the condition of $\delta(G)\geq (\frac{1}{2} + o(1))n$ ensures the containment of every $n$-vertex bounded degree tree as a subgraph, 
and in \cite{KSS01}, they extended this result to the trees with maximum degree $o(\frac{n}{\log{n}})$.
Furthermore, B\"ottcher, Schacht and Taraz \cite{BST09} found a minimum degree condition guaranteeing
the containment of an $n$-vertex graph $H$ with sublinear bandwidth and bounded maximum degree.

Another important research direction in extremal graph theory concerns with decomposition of graphs. 
We say that a collection $\mathcal{H}=\{H_1,\dots, H_s\}$ of graphs \emph{packs} into $G$ if $G$ contains pairwise edge-disjoint copies of $H_1, \dots, H_s$ as a subgraph.
If $\cH$ packs into $G$ and $e(\cH)=e(G)$ (where $e(\cH)=\sum_{H\in \cH} e(H)$), then we say that the graph $G$ has a \emph{decomposition} into $\cH$.
If a packing covers almost all edges of the host graph $G$, then we informally say that $G$ has an \emph{approximate} decomposition. The history of graph decomposition problems dates back to 19th century when Kirkman 
characterised all $n$ such that $K_n$ decomposes into triangles and when Walecki characterised all $n$ such that $K_n$ decomposes into Hamilton cycles.
The latter was extended to Hamilton decompositions of 
regular graphs $G$ of high degree in a seminal work of Csaba, K\"uhn, Lo, Osthus and Treglown \cite{CKOLT}. Yet another generalisation, the famous Oberwolfach conjecture states that
for any $n$-vertex graph $F$ consisting of vertex-disjoint cycles, 
$K_n$ has a decomposition into $F$, except a finitely many values of $n$.
After many partial results, this was finally resolved very recently for all large $n$ by Glock, Joos, Kim, K\"uhn and Osthus \cite{GJKKO18}.

Further famous open problems in the area are the tree packing conjecture of Gy\'arf\'as and Lehel, which says that for any collection  $\mathcal{T}=\{T_1,\dots, T_n\}$ of trees with $|V(T_i)|=i$, the complete graph $K_n$ has a decomposition into $\mathcal{T}$, and Ringel's conjecture which says that for any $(n+1)$-vertex tree $T$, the complete graph $K_{2n+1}$ has a decomposition into $2n+1$ copies of $T$. Lots of research has been done regarding these conjectures, \cite{BHPT16,  FLM15, KKOT:ta, MRS16}. Recently, Joos, Kim, K\"uhn and Osthus \cite{JKKO} proved both conjectures for trees with bounded degree and larger $n$. A key ingredient of their proof is a blow-up lemma for approximate decompositions of $\epsilon$-regular graphs $G$ developed by Kim, K\"uhn, Osthus and Tyomkyn \cite{KKOT:ta}.  Allen, B\"ottcher, Hladk\`y and Piguet~\cite{ABHP} later proved an approximate decomposition result for degenerate graphs with maximum degree $o(\frac{n}{\log{n}})$. Montgomery, Pokrovskiy and Sudakov \cite{MPS18} found an approximate decomposition of $K_{2n+1}$ into any $(1-o(1))n$-vertex tree $T$, proving an approximate version of Ringel's conjecture.  

In \cite{CKKO17}, Condon, Kim, K\"uhn and Osthus determined the degree threshold for an almost regular graph to have an approximate decomposition into a collection $\cH$ of separable graphs with bounded degree. In particular, one corollary of their result is that for any collection $\cT$ of $n$-vertex bounded degree trees, any almost-regular $n$-vertex graph $G$ with degree at least $(\frac{1}{2}+o(1))n$ has an approximate decomposition into $\cT$.

Most of the aforementioned results are sharp as there are graphs which do not satisfy the conditions and do not have a desired subgraph or a desired (approximate)-decomposition. For example, regarding the corollary on approximate tree decomposition, 
a complete balanced bipartite graph $K_{\frac{n}{2},\frac{n}{2}}$ or disjoint union of two complete graphs $2K_{\frac{n}{2}}$ shows that the degrees of $G$ has to be at least $(\frac{1}{2}+o(1))n$ to contain a single copy of an $n$-vertex tree with unbalanced bipartition, let alone an approximate decomposition. However, such examples have very special structures. Hence it is natural to ask how the degree conditions change if we exclude graphs with such special structures.

Another active line of research is to study these changes on the degree conditions when we exclude a large independent set. Balogh, Molla and Sharifzadeh~\cite{BMSh16} initiated this by proving that if an $n$-vertex $G$ does not contain any linear-sized independent set and $\delta(G) \geq (\frac{1}{2}+o(1))n$, then $G$ contains a triangle-factor. This weakens the bound $\delta(G)\geq \frac{2}{3}n$ from the Corr\'adi-Hajnal theorem~\cite{CH63}. Nenadov and Pehova \cite{NP18} further generalised this into a $K_r$-factor.

However, excluding large independent sets is not sufficient to guarantee a large connected subgraph, e.g. $2K_{\frac{n}{2}}$ does not contain an independent set of size three, and clearly it does not contain any tree with more than $\frac{n}{2}$ vertices.
This example suggests that it is necessary to exclude large bipartite holes, rather than independent sets.
 An \emph{$(s,t)$-bipartite hole} in a graph $G$ consists of two disjoint vertex sets $S,T\subseteq V(G)$ with $|S|=s, |T|=t$ such that there are no edges between $S$ and $T$ in $G$. The \emph{bi-independence number} $\widetilde{\alpha}(G)$ of a graph $G$ denotes the largest number $r$ such that $G$ contains an $(s,t)$-bipartite hole for every pair of non-negative integers $s$ and $t$ with $s+t=r$. Note that $\widetilde{\alpha}(G)\leq r$ implies that there is at least one edge between any two disjoint vertex sets of size $r$, i.e. $K_{r,r}\nsubseteq \overline{G}$.
McDiarmid and Yolov \cite{MY17} proved the existence of Hamilton cycle on a graph $G$ satisfying $\delta(G)\geq \widetilde{\alpha}(G)$.

Our main theorem states that if $G$ has sublinear bi-independence number and $\cT$ consists of bounded degree trees with at most $(1-o(1))n$ vertices, then the degree threshold for an approximate tree-decomposition of Condon, Kim, K\"uhn and Osthus can be significantly lowered. 
There is an obvious analogy between this theme and the Ramsey-Tur\'{a}n theory in which one studies Tur\'an type problmes for graphs with sublinear independence number. See e.g. \cite{SS01} for more of Ramsey-Tur\'{a}n theory.
Here we replace a Tur\'an-type conclusion with one along the lines of approximate decomposition of $G$ into large graphs.

\begin{theorem}\label{thm:main}
For all $\Delta \in \mathbb{N}$, $0< \alpha, \nu < 1$, there exist $\xi, \eta >0$ and  $n_0 \in \mathbb{N}$ such that  the following holds for all $n \ge n_0$. Suppose that $G$ is an $n$-vertex graph such that $d_{G}(v) = (\alpha\pm \xi)n$ for all vertices $v\in V(G)$ except at most $\xi n$ vertices and $\widetilde{\alpha}(G)\leq \eta n$.
Then any collection $\mathcal{T}$ of trees $T$ satisfying the following conditions packs into $G$.
\begin{enumerate}[label=(\roman*)]
\item $|T| \le (1-\nu) n$ and $\Delta(T) \le \Delta$ for all $T \in \mathcal{T},$
\item $e(\mathcal{T}) \le (1-\nu)e(G).$
\end{enumerate}
\end{theorem}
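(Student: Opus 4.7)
The plan is to adapt the approximate decomposition framework from \cite{CKKO17,KKOT:ta,JKKO}, using the bi-independence hypothesis as a substitute for the $(\tfrac{1}{2}+o(1))n$ degree bound. First, I would apply Szemer\'edi's regularity lemma to $G$ to obtain an $\epsilon$-regular equipartition $V_1, \dots, V_k$ of cluster size $m \approx n/k$, and define the reduced graph $R$ on $[k]$ by joining $i \sim j$ whenever the pair $(V_i, V_j)$ is $\epsilon$-regular of density at least, say, $\alpha/2$. Standard double counting, together with the condition $d_G(v) = (\alpha \pm \xi)n$ for all but $\xi n$ vertices, shows that $R$ is approximately $\alpha k$-regular.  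Crucially, the hypothesis $\widetilde\alpha(G) \le \eta n$ transfers to $R$: any $(s,t)$-bipartite hole in $R$ with $s+t$ large would, after removing the few irregular/exceptional pairs, produce a bipartite hole in $G$ of size proportional to $(s+t)m$, so $\widetilde\alpha(R) \le \eta' k$ for some $\eta'(\eta) \to 0$. Choosing $\eta$ small compared to $\alpha$ gives $\delta(R) \gg \widetilde\alpha(R)$.

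Next, I would extract from $R$ the routing structure needed to carry the trees. Since $\delta(R) \gg \widetilde\alpha(R)$, one can invoke a McDiarmid--Yolov type argument \cite{MY17} to find a Hamilton cycle in $R$, and more generally to find many edge-disjoint almost-spanning regular/Hamiltonian subgraphs whose total edge count is $(1-o(1))e(R)$. This is the substitute for the near-Hamilton-decomposition of the reduced graph that is available for free in the $\delta \ge (\tfrac12+o(1))n$ regime. Along the way the $\le \xi n$ exceptional vertices of $G$ are placed in a small exceptional set handled by absorption.

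With these structures in hand, each tree $T \in \cT$ of size at most $(1-\nu)n$ and maximum degree at most $\Delta$ is embedded by the standard two-level strategy: cut $T$ at $O(1/\nu)$ carefully chosen edges to split it into $O(1)$ bounded-degree subtrees of size $\Theta(m)$ each, then assign the subtrees to consecutive clusters along the chosen Hamilton cycle in $R$, yielding a \emph{macro-embedding} of $T$ into the blow-up of $R$. The reserve edges at the cut-points of $T$ are routed between adjacent clusters using the regularity of the corresponding pairs. The whole collection $\cT$ is then embedded simultaneously and edge-disjointly by feeding these macro-embeddings into the blow-up lemma for approximate decompositions of Kim, K\"uhn, Osthus and Tyomkyn \cite{KKOT:ta}, partitioning $\cT$ into $O(1)$ subfamilies so that each subfamily's total edge usage on each regular pair stays below a $(1-\nu/2)$-fraction of the pair's edges.

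The main obstacle I expect is the combinatorial bookkeeping of \emph{where} to route each tree: one must choose, for each $T \in \cT$, both a macro-embedding and a starting position on the Hamilton cycle so that (i) the aggregate demand on each edge of $R$ is at most $(1-\nu/2)$ times its capacity in $G$, and (ii) the condition needed by the blow-up lemma for approximate decompositions -- namely that each subfamily's graphs are jointly bounded-degree on each cluster role -- is met. This allocation step is where the hypothesis $e(\cT) \le (1-\nu) e(G)$ is used critically, via a random or greedy allocation argument balancing loads across the Hamilton cycle. A second subtlety is the translation of $\widetilde\alpha(G) \le \eta n$ to $\widetilde\alpha(R)$ in the presence of the $\xi n$ exceptional vertices and the $\epsilon k^2$ irregular pairs, which requires choosing the hierarchy $\epsilon \ll \eta \ll \xi \ll \nu, \alpha$ with some care.
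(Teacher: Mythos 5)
Your proposal contains a genuine gap at its central step: the claim that $\widetilde{\alpha}(G) \le \eta n$ transfers to $\widetilde{\alpha}(R) \le \eta' k$ for the reduced graph $R$ is false. The reduced graph only records pairs $(V_i,V_j)$ whose density exceeds the threshold $d$; a large bipartite hole in $R$ can perfectly well correspond to a configuration in $G$ where every pair $(V_i,V_j)$ with $i\in S$, $j\in T$ carries a positive but sub-$d$ density of edges. Concretely, take $G$ to be two disjoint cliques $K_{n/2}$ joined by a sparse random bipartite graph $\bG(n/2,n/2,C/n)$ for a large constant $C$. Then $d_G(v)=(\tfrac12 \pm o(1))n$, $\widetilde{\alpha}(G)\le \eta n$, and the theorem should apply; yet for any fixed $d>0$ the reduced graph splits into two components, so there is no Hamilton cycle in $R$, and $\widetilde{\alpha}(R)$ is of order $k$. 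The whole plan of routing tree pieces along a Hamilton cycle of $R$ therefore breaks down exactly on the class of graphs the theorem is designed to handle. (Your hierarchy $\epsilon\ll\eta\ll\xi$ is also the wrong way around for this reason: $\eta$ must be chosen \emph{after} $\epsilon$, i.e.\ $\eta\ll\epsilon$, because bipartite holes must be excluded even at cluster scale.)

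The paper circumvents this precisely by refusing to propagate the bi-independence condition through the regularity lemma. Before regularising, Lemma~\ref{lem: subgraph biindependence} peels off a sparse spanning subgraph $H\subseteq G$ with $\Delta(H)\le\xi n$ that \emph{still} has small bi-independence number, and $H$ is kept aside as a connectivity reservoir. Regularity is then applied only to $G-E(H)$, and the reduced structure used is not a Hamilton cycle but a large family of edge-disjoint \emph{matchings} $M_1,\dots,M_\kappa$ in the reduced multigraph (obtained via Vizing), each giving super-regular pairs. Since a matching is disconnected, one cannot embed a large tree into the corresponding blown-up pairs alone; the paper fixes this in Lemma~\ref{lem: embed one} by cutting each tree $T$ into a sparse ``skeleton'' forest $C_T$ (roots plus two levels of descendants) and a large remainder forest $F_T$. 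The skeleton $C_T$ is embedded greedily into the reserved sets $U^k$ using the edges of $H$ (this is where $\widetilde{\alpha}(H)$ small is used, via Lemma~\ref{lem: highly connecting sets}), and $F_T$ is embedded into the super-regular pairs $(V_{s,1},V_{s,2})$ via the blow-up lemma for approximate decompositions, with consistency at the cut-vertices enforced by the target sets $A_y^{s'}$. Your idea of using McDiarmid--Yolov type connectivity is natural but is applied at the wrong level (clusters instead of vertices); the correct move is to exploit bi-independence directly on $G$ through the sparse reservoir $H$.
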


Note that by considering a collection of paths of length $(1-o(1))n$, it is easy to see that the almost regular degree condition on $G$ is necessary. 
Theorem~\ref{thm:main} is sharp in several point of views.
First, the condition on $\widetilde{\alpha}(G)$ is necessary as embedding even a single copy of $(1-o(1))n$-vertex tree into $2K_{\frac{n}{2}}$ is impossible. Hence, $\widetilde{\alpha}(G)$ is the correct parameter to consider. Second, the trees in $\cT$ having at most $(1-o(1))n$ vertices is also best possible. To see this, we consider a copy of slightly unbalanced complete  bipartite graph with parts $X_1$ of size $\frac{1}{2}(1-\xi)n$ and $X_2$ of size $\frac{1}{2}(1+\xi)n$. We put a copy of random graphs  $\bG(\frac{(1-\xi)n}{2}, \frac{\xi}{2}), \bG(\frac{(1+\xi)n}{2}, \frac{\xi}{2})$ in  $X_1$ and $X_2$, respectively. Let $G$ be the resulting graph and let $\cT$ be a collection of $\frac{1}{2}(1-\nu)n$ copies of $n$-vertex paths.
Then $G$ satisfies all the conditions in Theorem~\ref{thm:main} except that the trees are now spanning. 
Even more, it satisfies a stronger condition that $\widetilde{\alpha}(G) = O(\log{n})$ and all vertices in $G$ has degree $(\frac{1}{2}\pm \xi)n$.
However, as each path has the unique bipartition which is almost balanced, each path uses at least $\xi n-2$ edges inside the bigger part $X_2$. Thus, we need at least $(\xi n -2)\frac{(1-\nu)n}{2}$ edges inside the bigger part in order to pack $\cT$ into $G$. Since  $\bG(\frac{(1+\xi)n}{2}, \frac{\xi}{2})$ only contains at most $\frac{\xi n^{2}}{3}$ edges, $\cT$ does not pack into $G$ if $\nu< \frac{1}{3}$.
 
As the last example contains two vertices $u,v$ with degree difference at least $\frac{\xi n}{2}$,
one might speculate that it is plausible to obtain a packing of spanning trees into $G$ if one additionally assume that $G$ is
much closer to being regular. However, the following example shows that we still need more conditions. Consider a graph $G$ obtained from $K_{\frac{n}{2},\frac{n}{2}}$ by putting a copy of  $\bG(\frac{n}{2}, o(\frac{\log{n}}{n}))$ in each part $X_1, X_2$, respectively. It is easy to see that $\widetilde{\alpha}(G)=o( \frac{n}{\sqrt{\log{n}}})$.
Let $\cT$ be the collection of $\frac{1}{4}(1-\nu)n$ copies of $n$-vertex complete ternary tree $T$ of height $O(\log{n})$.
Csaba, Levitt, Nagy-Gy\H{o}rgy and Szemer\'edi \cite{CLNS10} showed that any embedding of such complete ternary tree $T$ must use at least $\frac{1}{17}\log{n}$ non-crossing edges inside parts $X_i$ of $G$.
Thus, we need at least $\frac{1}{68}(1-\nu)n\log{n}$ non-crossing edges to obtain a packing of $\cT$ into $G$. However, $G$ contains at most $o(n\log{n})$ non-crossing edges. Hence, this shows that it is necessary that the trees in $\cT$ have at most $(1-o(1))n$ vertices. It is not difficult to modify the above example to obtain a regular graph $G$ (rather than just close to being regular) with $\widetilde{\alpha}(G)=o(n)$ which does not admit an approximate decomposition into complete ternary trees.

Our theorem has a corollary in randomly perturbed graph model which combines extremal and probabilistic aspects in one graph model.
Bohman, Frieze and Martin \cite{BFM03} introduced the concept of randomly perturbed graph model by proving that given any fixed $\alpha >0$ there exists a constant $C$ such that
for any $n$-vertex graph $G$ with $\delta(G)\geq \alpha n$, the graph $G\cup \bG(n,\frac{C}{n})$ contains a Hamilton cycle with high probability.
This sparks numerous research see e.g.~\cite{BTW17,BHKM18, BFKM04, BHKMPP18, BMPP18, HZ18, JK18, KKS16, KKS17,  KST04,MM18}.

The following corollary is a direct consequence of Theorem~\ref{thm:main}. It is easy to see that for large constant $C$, the random graph $\bG(n,\frac{C}{n})$ does not contain a $(C^{-1/3}n, C^{-1/3}n)$-bipartite hole, hence $\widetilde{\alpha}(G\cup \bG(n,\frac{C}{n})) \leq 2 C^{-1/3} n$.

\begin{corollary}\label{thm:near spanning}
For all $\Delta \in \mathbb{N}$, $0< \alpha, \nu < 1$, there exist $\xi_0 >0$ and  $n_0, C \in \mathbb{N}$ such that the following holds for all $\xi \le \xi_0$ and $n \ge n_0$. Suppose that $G$ is an $n$-vertex graph such that $d_{G}(v) = (\alpha\pm \xi)n$ for all vertices $v\in V(G)$ except at most $\xi n$ vertices
and $\mathbf{G}(n,\frac{C}{n})$ is a binomial random graph on the vertex set $V(G)$.
Then  the following holds with high probability.
Any collection $\mathcal{T}$ of trees $T$ satisfying the following conditions packs into $G\cup \bG(n,\frac{C}{n})$.
\begin{enumerate}[label=(\roman*)]
\item $|T| \le (1-\nu) n$ and $\Delta(T) \le \Delta$ for all $T \in \mathcal{T},$
\item $e(\mathcal{T}) \le (1-\nu)e(G).$
\end{enumerate}
\end{corollary}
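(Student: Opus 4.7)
The plan is to reduce to Theorem~\ref{thm:main} applied to the combined graph $G' := G \cup \bG(n, \tfrac{C}{n})$. Given $\Delta, \alpha, \nu$, I invoke Theorem~\ref{thm:main} with the same parameters $(\Delta, \alpha, \nu)$ to obtain constants $\xi', \eta', n_0'$, then set $\xi_0 := \xi'/2$, pick $C$ to be a sufficiently large fixed integer (depending on $\eta'$), and take $n_0 \geq n_0'$ large enough that the $o(n)$-scale error terms below can be absorbed.

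The degree and edge conditions of Theorem~\ref{thm:main} for $G'$ follow from routine concentration. Since $C$ is a constant, a Chernoff plus union bound gives w.h.p.\ that every vertex of $R := \bG(n, \tfrac{C}{n})$ has degree $o(n)$, and hence $d_{G'}(v) = d_G(v) \pm o(n) = (\alpha \pm \xi')n$ for all but $\xi n \leq \xi' n$ vertices. Condition (ii) transfers trivially by monotonicity: $e(\cT) \leq (1-\nu)e(G) \leq (1-\nu)e(G')$.

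The substantive ingredient is that $\widetilde{\alpha}(G') \leq \eta' n$ w.h.p. This combines the monotonicity $\widetilde{\alpha}(G') \leq \widetilde{\alpha}(R)$ (any pair $(S,T)$ that is a bipartite hole in $G \cup R$ is a fortiori one in $R$) with the one-line union bound flagged in the remark preceding the corollary: the expected number of $(s_0,s_0)$-bipartite holes in $R$ with $s_0 := \lceil C^{-1/3} n \rceil$ is at most
\[
\binom{n}{s_0}^{2}\Bigl(1 - \tfrac{C}{n}\Bigr)^{s_0^{2}} \leq \exp\bigl(2n\ln 2 - C^{1/3} n\bigr) = o(1)
\]
provided $C$ is a sufficiently large fixed constant. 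Hence w.h.p.\ no such pair exists, so $\widetilde{\alpha}(R) < 2 s_0 \leq \eta' n$, and applying Theorem~\ref{thm:main} to $G'$ yields the desired packing of $\cT$ into $G' = G \cup \bG(n, \tfrac{C}{n})$. I anticipate no real obstacle beyond bookkeeping: $C$ must be chosen large relative to $\eta'^{-1}$ so that the union bound eliminates all linear-sized bipartite holes in $R$, while being a fixed constant in $n$ so that the $o(n)$-perturbations from $R$ to the degrees of $G$ fit comfortably inside the slack between $2\xi_0$ and $\xi'$.
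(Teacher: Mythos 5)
Your proposal is correct and takes exactly the route the paper intends: verify that $G' = G \cup \bG(n, C/n)$ satisfies the hypotheses of Theorem~\ref{thm:main} (degrees perturbed by only $O(\log n) = o(n)$ w.h.p., edge count only increases, and the standard first-moment bound kills all $(\lceil C^{-1/3}n\rceil, \lceil C^{-1/3}n\rceil)$-bipartite holes in $\bG(n,C/n)$ for $C$ large), then apply the theorem to $G'$. The paper declares the corollary a direct consequence and leaves exactly this verification implicit, even flagging the bipartite-hole estimate in the preceding paragraph; you have supplied the omitted bookkeeping correctly.
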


Note that the above statement is universal in the sense that with high probability, this holds for every collection $\cT$ simultaneously. Corollary~\ref{thm:near spanning} is  sharp in the following senses. By considering a disconnected graph $G$, it is easy to see that the probability $O(\frac{1}{n})$ is best possible.
Also the trees having size $(1-o(1))n$ is best possible. 
The above first example obtained from slightly unbalanced complete bipartite graph show that we need $\mathbf{G}(n,\Omega(1))$ in order to obtain an approximate decomposition of almost $(\alpha\pm \xi)n$-regular graphs into spanning trees with bounded maximum degree, and the second example with complete ternary trees shows that at least $\mathbf{G}(n, \Omega(\frac{\log{n}}{n}))$ is required for 
obtaining an approximate decomposition of regular graphs into spanning trees.
Motivated by this second example, we ask the following question.
\begin{question}\label{question 1}
Determine the optimal function $f$ satisfying the following.
For given $\alpha, \nu>0$, if $G$ is an $n$-vertex $\lfloor \alpha n \rfloor$-regular graph and $\widetilde{\alpha}(G)\leq o(f(n))$.
Let $\mathcal{T}$ be a collection trees $T$ satisfying the following conditions.
\begin{enumerate}[label=(\roman*)]
\item $|T| \le n$ and $\Delta(T) \le \Delta$ for all $T \in \mathcal{T},$
\item $e(\mathcal{T}) \le (1-\nu)e(G).$
\end{enumerate}
Then $\cT$ pack into $G$.
\end{question}
We can consider the same question of finding the optimal function $g(n)$ by replacing the graph $G$ with  $G'\cup \bG(n, \omega(g(n)))$ and an arbitrary $n$-vertex $\lfloor \alpha n \rfloor$-regular graph $G'$.
Since the regularity lemma does not distinguish between an $(\alpha\pm o(1))n$-regular graph and an $\alpha n$-regular graph,
the example we obtained from $K_{(1-\xi)n,(1+\xi)n}$ shows that Question~\ref{question 1} may not be proved by the approach in this paper which is based on the regularity lemma.

Our theorem also has further applications on tree packing conjectures, such as Ringel's conjecture in the setting of almost regular graphs.
It implies that if $\alpha>0$ and $G$ is an almost $\alpha n$-regular $(2n+1)$-vertex graph with $\widetilde{\alpha}(G) =o(n)$, and $T$ is an $n$-vertex tree with bounded maximum degree, then $G$ has an approximate decomposition into $(1-o(1))\alpha n$ copies of $T$. Same statement also holds for $G\cup \bG(n,\frac{C}{n})$ with any almost $\alpha n$-regular $(2n+1)$-vertex graph $G$.

\section{Preliminaries}\label{sec: prelim}

Denote $[t] := \{1, \dots, t\}.$  If we claim that a result holds for $1/n \ll a \ll b \ll1$, this mean  that there exist non-decreasing functions $f:(0,1] \rightarrow (0,1] $ and $g:(0,1] \rightarrow (0,1]$ such that the result holds for all $0 \le a, b \le 1$ and all $n \in \mathbb{N}$ with $a \le f(b)$ and $1/n \le g(a)$. 
 We may omit floors and ceilings when they are not essential.
In this paper, \emph{graphs} are simple undirected finite graphs and  \emph{multigraphs} are graphs with potentially parallel edges without loops. 

Given  collection of trees $\cT$, denote by $|\cT|$ the number of trees in $\cT$ and $e(\cT):= \sum_{T\in \cT} e(T)$.
Let $G =(V,E)$ be a graph and $A, B \subseteq V(G)$ satisfying $A \cap B = \emptyset$. 
Denote by $E_G(A,B)$  the set of edges in $G$ between $A$ and $B$.  
Let $e_G(A,B):= |E_G(A,B)|$.
For sets $X, A\subseteq V(G)$, we define $N_{G,A}(X) := \{ w \in A : uw \in E(G) \text{ for all } u\in X \}$. In particular, we have $N_{G,A}(\emptyset)=A$, and let $N_{G}(X):=N_{G,V(G)}(X)$. Let $d_{G,A}(X) = |N_{G,A}(X)|$.
We write $d_{G,A}(v_1,\dots, v_i)$ for $d_{G,A}(\{v_1,\dots, v_i\})$.
  Denoted by  $N_G^d(X) \subseteq V(G) \backslash X$  the set of vertices of distance at most $d$ from a vertex in a set $X \subseteq V(G)$.  Note that, in this definition, $N_{G}(X)$ and $N_{G}^{1}(X)$ are in general different for $|X|>1$.  For a tree $T$ and a vertex $x$, let $(A_T(x), B_T(x))$  be the unique vertex partition into two independent sets satisfying $x\in A_T(x)$. Denote by $G\setminus A$ the induced subgraph on $V(G)\setminus A$, and by $G-E$ the spanning subgraph with edge set $E(G)-E$, where $A\subseteq V(G)$ and $E\subseteq E(G)$.
For a graph $G$ and two disjoint vertex subsets $A$ and $B$, the {\it density} of $(A,B)$ is defined as  $$\den_G(A,B): = \frac{e_G(A,B)}{|A||B|}.$$
For a rooted tree $(T,r)$ with the root $r$,  let $T(u)$ be the subtree of $T$ consisting of all vertices $v$ such that the path between $r$ and $v$ contains $u$. For a vertex $x\in V(T)$, denoted by $a_T(x)$  the parent of $x$. Denoted by 
$D_{T}^k(x)$  the set of all descendents $y$ of $x$ with distance exactly $k$ in the tree $T$, and by $D_{T}^{\leq k}(x)$ be the set of descendents $y$ of $x$ with distance at most $k$.  We write $D_{T}(x) := D_{T}^{1}(x)$.
 For two functions $\phi:A\rightarrow B$ and $\phi': A'\rightarrow B'$ with $A\cap A'=\emptyset$, we define $\phi\cup \phi'$ as a function from $A\cup A'$ to $B\cup B'$ such that for each $x\in A\cup A'$, $$(\phi\cup\phi')(x) := \left\{ \begin{array}{ll}
\phi(x) & \text{ if } x\in A \\
\phi'(x) & \text{ if } x\in A'.
\end{array}\right.$$

%
We will use well-known Chernoff's inequality and Azuma's inequality. As our applications are very simple and standard, we will omit the detailed computation. See \cite{Azu67, Hoe63, JLR00} for the statements of Chernoff's inequality and Azuma's inequality.
The concept of $(\epsilon, d)$-\emph{regularity} and \emph{Szemer\'edi's regularity lemma} will be useful for us.
A bipartite graph $G$ with vertex partition $(A,B)$ is  \emph{$(\epsilon,d)$}-\emph{regular} if
for all sets $A'\subseteq A$, $B'\subseteq B$ with  $|A'|\geq \epsilon |A|$, $|B'|\geq \epsilon |B|$,
we have
\begin{align*}
	| \den_G(A',B')- d| < \epsilon.
\end{align*}
A bipartite graph $G$   is \emph{$\epsilon$-regular}
if $G$ is $(\epsilon,d)$-regular for some $d$.
Additionally, a bipartite graph $G$ is  $(\epsilon,d)$\emph{-super-regular} if $G$ is $(\epsilon,d)$-regular with $d_{G}(a)= (d\pm \epsilon)|B|$ for $a\in A$ and $d_{G}(b)= (d\pm \epsilon)|A|$ for $b\in B$. The following three well-known lemmas will be useful when we modify a given $\epsilon$-regular partition.


\begin{proposition}\label{prop: reg degree}
Let $0<\epsilon \ll d \leq 1$. Suppose that $G$ is an $(\epsilon,d)$-regular bipartite graph with vertex partition $(A,B)$. Let $B'\subseteq B$ be a set with $|B'| \geq \epsilon^{1/3} |B|$. Then $A$ contains at least $(1-\epsilon^{1/2})|A|$ vertices $u$ satisfying $d_{G,B'}(u)\geq (d-\epsilon^{1/2})|B'|$.
\end{proposition}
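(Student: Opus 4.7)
The plan is a standard contradiction argument directly from the definition of $(\epsilon,d)$-regularity. Let $A^* \subseteq A$ denote the set of ``bad'' vertices, i.e.\ those $u \in A$ with $d_{G,B'}(u) < (d-\epsilon^{1/2})|B'|$. I want to show $|A^*| < \epsilon^{1/2}|A|$, so I assume the opposite for contradiction.

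First I would verify that the regularity condition is applicable to the pair $(A^*, B')$. Since $\epsilon \ll 1$, we have $\epsilon \le \epsilon^{1/2}$ and $\epsilon \le \epsilon^{1/3}$, so the assumptions $|A^*| \ge \epsilon^{1/2}|A| \ge \epsilon |A|$ and $|B'| \ge \epsilon^{1/3}|B| \ge \epsilon |B|$ both hold. Hence $(\epsilon, d)$-regularity gives
\[
\den_G(A^*, B') \ge d - \epsilon.
\]
On the other hand, each vertex of $A^*$ contributes fewer than $(d-\epsilon^{1/2})|B'|$ edges to $B'$ by definition, so $e_G(A^*, B') < (d-\epsilon^{1/2})|A^*||B'|$, i.e.\ $\den_G(A^*, B') < d - \epsilon^{1/2}$. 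Combining the two bounds yields $d - \epsilon \le \den_G(A^*, B') < d - \epsilon^{1/2}$, which contradicts $\epsilon \ll 1$ (so $\epsilon^{1/2} > \epsilon$). Therefore $|A^*| < \epsilon^{1/2}|A|$, and the complement $A \setminus A^*$ has the required size.

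There is no real obstacle here; the only care needed is in tracking the three size thresholds $\epsilon, \epsilon^{1/2}, \epsilon^{1/3}$ and making sure $\epsilon^{1/2}$ appears both as the exceptional-set fraction in $A$ and as the degree defect in $B'$, while the weaker threshold $\epsilon^{1/3}$ is all that is required of $|B'|$. This slack (the gap between $\epsilon^{1/3}$ and $\epsilon$) is what allows us to conclude for any $B'$ that is merely polynomially larger than the regularity scale, which is presumably how the proposition will be applied later in the paper.
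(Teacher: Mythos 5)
Your proof is correct and is the standard argument; note that the paper states this proposition without proof, labeling it (along with Propositions~\ref{prop: super subgraph} and~\ref{prop: subgraph reg}) as a well-known consequence of $(\epsilon,d)$-regularity, so there is no in-text proof to compare against. The argument you give — apply the regularity condition to the pair $(A^*,B')$ of bad vertices and the prescribed subset, using $|A^*|\ge\epsilon^{1/2}|A|\ge\epsilon|A|$ and $|B'|\ge\epsilon^{1/3}|B|\ge\epsilon|B|$, and derive $d-\epsilon<\den_G(A^*,B')<d-\epsilon^{1/2}$, a contradiction since $\epsilon^{1/2}>\epsilon$ for $\epsilon\in(0,1)$ — is exactly what is intended, and your bookkeeping of the three exponents $1,\tfrac12,\tfrac13$ and their roles is accurate.
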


\begin{proposition}\label{prop: super subgraph}
Let $0<\epsilon \ll d \leq 1$. Suppose that $G$ is an $(\epsilon,d)$-regular bipartite graph with vertex partition $(A,B)$. Then, there exists sets $A'\subseteq A, B'\subseteq B$ with $|A'|\geq (1-2\epsilon) |A|$ and $|B'|\geq (1-2\epsilon)|B|$ 
such that $G[A',B']$ is an $(3\epsilon,d)$-super-regular bipartite graph with vertex partition $(A',B')$.
\end{proposition}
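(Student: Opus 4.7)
The plan is to obtain $A'$ and $B'$ by deleting from $A$ and $B$ the small number of vertices whose $G$-degrees are not already within $\epsilon$ of $d|B|$ and $d|A|$, respectively, and then to verify that the restriction $G[A',B']$ is $(3\epsilon,d)$-super-regular. The slack in the regularity parameter, $\epsilon\mapsto 3\epsilon$, is what absorbs the effect of restricting from $B$ to the slightly smaller set $B'$ (and symmetrically from $A$ to $A'$).

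First I would partition the deviant vertices by the sign of the deviation. Set
\begin{align*}
A_{\mathrm{low}} &:= \{a\in A : d_G(a) < (d-\epsilon)|B|\}, &
A_{\mathrm{high}} &:= \{a\in A : d_G(a) > (d+\epsilon)|B|\},
\end{align*}
and define $B_{\mathrm{low}},B_{\mathrm{high}}\subseteq B$ symmetrically. If $|A_{\mathrm{low}}|\geq \epsilon|A|$, then the pair $(A_{\mathrm{low}},B)$ would witness a density strictly less than $d-\epsilon$, contradicting the $(\epsilon,d)$-regularity of $G$ since $|B|\geq \epsilon|B|$ trivially; the analogous argument applied to $A_{\mathrm{high}}$, $B_{\mathrm{low}}$, $B_{\mathrm{high}}$ shows each of those sets has size less than $\epsilon|A|$ or $\epsilon|B|$ as appropriate. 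Setting $A':=A\setminus(A_{\mathrm{low}}\cup A_{\mathrm{high}})$ and $B':=B\setminus(B_{\mathrm{low}}\cup B_{\mathrm{high}})$ then yields the required size bounds $|A'|\geq (1-2\epsilon)|A|$ and $|B'|\geq (1-2\epsilon)|B|$.

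It remains to verify the two clauses of $(3\epsilon,d)$-super-regularity for $G[A',B']$. For the regularity clause, any $A''\subseteq A'$, $B''\subseteq B'$ with $|A''|\geq 3\epsilon|A'|$ and $|B''|\geq 3\epsilon|B'|$ automatically satisfy $|A''|\geq 3\epsilon(1-2\epsilon)|A|\geq \epsilon|A|$ and similarly $|B''|\geq \epsilon|B|$, so the $(\epsilon,d)$-regularity of $G$ already gives $|\den(A'',B'')-d|<\epsilon<3\epsilon$. For the uniform-degree clause, a vertex $a\in A'$ loses at most $|B\setminus B'|\leq 2\epsilon|B|$ of its $G$-neighbours when restricted to $B'$, so $d_{G[A',B']}(a)=(d\pm 3\epsilon)|B'|$, using $|B'|\geq(1-2\epsilon)|B|$ to convert a $\pm\epsilon$ deviation measured against $|B|$ into a $\pm 3\epsilon$ deviation against $|B'|$; the identical computation handles $b\in B'$. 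No genuine obstacle arises here, as this is a standard cleanup passage from a regular to a super-regular pair; the only mild piece of bookkeeping is the short arithmetic justifying the constant $3$ in the new parameter, which is precisely the slack absorbed in both the regularity and the uniform-degree conditions when passing from $(A,B)$ to $(A',B')$.
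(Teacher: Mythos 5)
The paper states Proposition~\ref{prop: super subgraph} as one of three ``well-known lemmas'' with no proof given, so there is no in-paper argument to compare against. Your proof is correct and is the standard one: trim the (fewer than $\epsilon|A|$, resp.\ $\epsilon|B|$) vertices of atypical degree on each side, then check that the surviving pair inherits $3\epsilon$-regularity directly from $\epsilon$-regularity of $G$ and that each remaining vertex loses at most $2\epsilon|B|$ (resp.\ $2\epsilon|A|$) neighbours, which, together with $|B'|\geq(1-2\epsilon)|B|$ and $\epsilon\ll d$, yields the $(d\pm 3\epsilon)$ degree condition (the upper bound being trivial when $d+3\epsilon\geq 1$).
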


\begin{proposition}\label{prop: subgraph reg}
Let $0<\epsilon \ll d \leq 1$. Suppose that $G$ is an $(\epsilon,d)$-regular bipartite graph with vertex partition $(A,B)$. Let $E$ be a set of edges with $|E|\leq \epsilon^{10} |A||B|$.
Then $G-E$ is $(\epsilon^{1/2}, d)$-regular.
\end{proposition}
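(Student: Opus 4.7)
The plan is to verify the definition of $(\epsilon^{1/2},d)$-regularity for $G-E$ directly, by showing that removing $\epsilon^{10}|A||B|$ edges can perturb the density of any sufficiently large pair of subsets by at most $\epsilon^9$, which is dwarfed by the $\epsilon^{1/2}$ slack allowed in the conclusion.

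Concretely, I would fix arbitrary $A'\subseteq A$ and $B'\subseteq B$ with $|A'|\geq \epsilon^{1/2}|A|$ and $|B'|\geq \epsilon^{1/2}|B|$, and aim to show $|\den_{G-E}(A',B')-d|<\epsilon^{1/2}$. The first step is to invoke the $(\epsilon,d)$-regularity of $G$: since $\epsilon^{1/2}\geq \epsilon$, both $A'$ and $B'$ clear the size threshold in the regularity hypothesis, and therefore $\den_G(A',B')=d\pm \epsilon$. Because $G-E$ is a subgraph of $G$, this already supplies the upper bound $\den_{G-E}(A',B')\leq d+\epsilon\leq d+\epsilon^{1/2}$.

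For the matching lower bound, the key estimate is $e_{G-E}(A',B')\geq e_G(A',B')-|E|\geq (d-\epsilon)|A'||B'|-\epsilon^{10}|A||B|$. Dividing by $|A'||B'|$ and using $|A'||B'|\geq \epsilon^{1/2}\cdot \epsilon^{1/2}|A||B|=\epsilon|A||B|$, the edge-deletion term contributes at most $\epsilon^{10}/\epsilon=\epsilon^9$, so $\den_{G-E}(A',B')\geq d-\epsilon-\epsilon^9\geq d-\epsilon^{1/2}$. Combining the two bounds yields the desired $(\epsilon^{1/2},d)$-regularity.

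There is essentially no obstacle here; the proposition is a routine robustness statement for regular pairs, and the comfortable gap between the deletion budget $\epsilon^{10}|A||B|$ and the target deviation $\epsilon^{1/2}$ is far more than what the argument strictly requires. The only point that needs a moment of attention is to use the two-sided lower bound $|A'||B'|\geq \epsilon|A||B|$ (rather than just one factor of $\epsilon^{1/2}$) when absorbing the removed edges into the density error.
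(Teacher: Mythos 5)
Your proof is correct, and it is the standard direct verification one would give for this proposition; the paper itself states this fact without proof (it is presented as a well-known robustness property of regular pairs). The key observations — that $\epsilon^{1/2}$-large subsets clear the $\epsilon$-threshold for the original regularity, that removing edges can only lower the density, and that the deletion budget $\epsilon^{10}|A||B|$ contributes at most $\epsilon^{10}/\epsilon = \epsilon^{9}$ to the density once divided by $|A'||B'| \geq \epsilon|A||B|$ — are all sound, and indeed the slack between $\epsilon + \epsilon^{9}$ and $\epsilon^{1/2}$ is comfortably large for $\epsilon$ small. Your final remark correctly flags the one point that requires care, namely using both factors of $\epsilon^{1/2}$ to bound $|A'||B'|$ from below.
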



The following two lemmas will be useful for finding some edge/vertex partition of graphs. We omit the proofs as they easily follow from a standard random splitting argument.

\begin{lemma}\label{lem: reg decomp}
Let $0 < 1/n  \ll  \epsilon \ll d, 1/s, 1/t<1$.
Suppose that $G$ is an $(\epsilon,d)$-regular bipartite graph with vertex partition $(A,B)$ satisfying $|A|,|B|\geq n$. Let $p_1,\dots, p_s \in [0,1]$ be values such that $p_1+\dots + p_s \leq 1$.
Then there exist edge-disjoint spanning subgraphs $G_1,\dots, G_s$ of $G$ such that $G_i[A,B]$ is $(2\epsilon,dp_i)$-regular for each $i\in [s]$.
\end{lemma}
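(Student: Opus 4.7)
The plan is a standard random edge-splitting argument, and the only potential obstacle is ensuring the Chernoff-plus-union-bound argument still goes through when $|A|$ and $|B|$ may be much larger (but not smaller) than $n$.

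First I would independently assign each edge $e\in E(G)$ to $G_i$ with probability $p_i$ (for $i\in[s]$) and to no $G_j$ with the remaining probability $1-\sum_{j\in[s]} p_j\geq 0$. This produces edge-disjoint spanning subgraphs $G_1,\ldots,G_s$; it remains only to show that with positive probability, each $G_i[A,B]$ is $(2\epsilon, dp_i)$-regular.

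Fix subsets $A'\sub A$, $B'\sub B$ with $|A'|\geq 2\epsilon|A|$ and $|B'|\geq 2\epsilon|B|$, and an index $i\in [s]$. Since $2\epsilon>\epsilon$, the $(\epsilon,d)$-regularity of $G$ yields $e_G(A',B')=(d\pm\epsilon)|A'||B'|$, and hence $\Exp[e_{G_i}(A',B')]=p_i e_G(A',B')=(p_i d\pm \epsilon)|A'||B'|$. Applied to the independent indicators $\1[e\in E(G_i)]$ over $e\in E_G(A',B')$, Chernoff's inequality yields, for an absolute constant $c>0$,
\begin{equation*}
\Pro\bigl[\,|e_{G_i}(A',B')-\Exp[e_{G_i}(A',B')]|>\epsilon|A'||B'|\,\bigr]\leq 2\exp\bigl(-c\epsilon^2|A'||B'|\bigr),
\end{equation*}
and on the complementary event $\den_{G_i}(A',B')=p_i d\pm 2\epsilon$, giving the required density bound.

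A union bound over all triples $(A',B',i)$ bounds the total failure probability by $s\cdot 2^{|A|+|B|+1}\exp(-4c\epsilon^4|A||B|)$, using $|A'||B'|\geq 4\epsilon^2|A||B|$. The one remaining point is to check that the Chernoff exponent dominates $|A|+|B|$ even though $|A|$ and $|B|$ need not be comparable to $n$. This is where the hierarchy is used: since $\min(|A|,|B|)\geq n$ and $1/n\ll\epsilon$, we have
\begin{equation*}
\epsilon^4|A||B|\geq \epsilon^4 n\cdot\max(|A|,|B|)\geq \tfrac{1}{2}\epsilon^4 n\cdot(|A|+|B|)\gg |A|+|B|,
\end{equation*}
so the total failure probability is $o(1)$ and a valid splitting exists, yielding the desired $G_1,\ldots,G_s$.
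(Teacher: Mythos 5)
The proposal is correct and is exactly the ``standard random splitting argument'' that the paper explicitly invokes while omitting the details: assign each edge independently to $G_i$ with probability $p_i$, apply Chernoff/Hoeffding to each pair of subsets of density-relevant size, and union-bound, using $|A|,|B|\geq n$ and $1/n\ll\epsilon$ so that the exponent $\epsilon^4|A||B|$ dominates $|A|+|B|$ even when $|A|,|B|$ are much larger than $n$. The one tiny nit is that for $(2\epsilon,dp_i)$-regularity one wants a strict inequality $|\den_{G_i}(A',B')-dp_i|<2\epsilon$, so one should take the Chernoff deviation to be slightly less than $\epsilon|A'||B'|$ (say $\epsilon|A'||B'|/2$), but this changes nothing in the computation.
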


\begin{proposition}\label{prop: reg partition}
Let $0<1/n\ll \epsilon \leq d \leq 1$. Suppose that $G$ is an $(\epsilon,d)$-super-regular bipartite graph with vertex partition $(A,B)$ satisfying $|A|,|B|\leq n$.
Let $a_1,a_2, b_1,b_2 \in \mathbb{N}$ be numbers such that $a_1, a_2,b_1,b_2\geq \epsilon n$, $a_1+a_2=|A|$, and $ b_1+b_2=|B|$. Then there exists a partition $A_1,A_2$ of $A$ and $B_1, B_2$ of $B$ such that 
for any $i,j\in [2]$, we have $|A_i|=a_i, |B_j|=b_j$ and
the graph $G[A_i,B_j]$ is $(\epsilon^{1/2},d)$-super-regular.
\end{proposition}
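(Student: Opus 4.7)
The plan is to prove Proposition~\ref{prop: reg partition} via a standard random-slicing argument: sample $A_1\subseteq A$ uniformly at random among $a_1$-subsets and, independently, $B_1\subseteq B$ uniformly at random among $b_1$-subsets, then set $A_2=A\setminus A_1$ and $B_2=B\setminus B_1$. I aim to show that with probability $1-o(1)$ each of the four induced bipartite graphs $G[A_i,B_j]$ is $(\epsilon^{1/2},d)$-super-regular, which then yields the existence of a valid partition.

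For the degree condition, fix $v\in A$ and $j\in[2]$. Super-regularity of $G$ gives $|N_G(v)\cap B|=(d\pm\epsilon)|B|$, so $|N_G(v)\cap B_j|$ is hypergeometrically distributed with mean $(d\pm\epsilon)b_j$. Since $b_j\geq \epsilon n$ and $1/n\ll\epsilon$, Chernoff's inequality gives
\[
\Pr\bigl[\bigl||N_G(v)\cap B_j|-d b_j\bigr|>\tfrac{1}{2}\epsilon^{1/2}b_j\bigr]\leq n^{-3},
\]
and a union bound over $v\in A\cup B$ and $j\in[2]$, together with the symmetric statement for vertices in $B$, handles all four subgraphs simultaneously. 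For the regularity condition, I need that for each $(i,j)$ and every $S\subseteq A_i$, $T\subseteq B_j$ with $|S|\geq \epsilon^{1/2}a_i$ and $|T|\geq \epsilon^{1/2}b_j$, one has $|\den_G(S,T)-d|<\epsilon^{1/2}$. If both $|S|\geq \epsilon|A|$ and $|T|\geq \epsilon|B|$, the $(\epsilon,d)$-regularity of the host graph yields this deterministically with $\epsilon$ in place of $\epsilon^{1/2}$, independently of the partition. Otherwise, the randomness of $(A_1,B_1)$ is used: the probability that a fixed pair $(S,T)\subseteq A\times B$ lies entirely inside $(A_i,B_j)$ is bounded by $(a_i/|A|)^{|S|}(b_j/|B|)^{|T|}$, and a weighted union bound over the pairs of each admissible size, combined with Hoeffding-type concentration for sampling without replacement, controls the total failure probability.

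The main obstacle is precisely the regularity check for small subsets, whose sizes fall below the threshold at which the $(\epsilon,d)$-regularity of the ambient graph applies directly. The crucial input here is super-regularity (and not merely regularity) of $G$: summing vertex degrees shows $e_G(S,B)=(d\pm\epsilon)|S||B|$ for every $S\subseteq A$, and similarly $e_G(A,T)=(d\pm\epsilon)|A||T|$ for every $T\subseteq B$. Hoeffding's inequality then transfers this to the random restrictions $e_G(S,B_j)$ and $e_G(A_i,T)$, reducing the remaining bilinear statement about $e_G(S,T)$ for $S\subseteq A_i$, $T\subseteq B_j$ to a calculation that closes comfortably because $\epsilon^{1/2}\gg\epsilon$ and $a_i,b_j\gg 1/\epsilon$. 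Combined with the degree step, this produces the desired $(\epsilon^{1/2},d)$-super-regular decomposition of $G$ with probability $1-o(1)$, whence such a partition exists.
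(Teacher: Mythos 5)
Your general plan (random equitable slicing of both sides, then verify the four pairs are super-regular) is exactly the approach the paper gestures at, and the degree step is fine: hypergeometric concentration for $|N_G(v)\cap B_j|$ and $|N_G(w)\cap A_i|$, unioned over the $O(n)$ vertices, gives the super-regularity degree conditions since $a_i,b_j\geq\epsilon n\gg\log n$. The problem is in the regularity check for pairs $(S,T)$ that are a $\geq\epsilon^{1/2}$-fraction of $A_i\times B_j$ but a $<\epsilon$-fraction of $A\times B$; this is the genuine content of the proposition (in the extreme case $|A|=n$, $a_i=\epsilon n$, one has $a_i/|A|=\epsilon$, so the deterministic slicing lemma yields only trivial $(\epsilon/\epsilon,d)$-regularity). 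Neither of the two mechanisms you invoke closes this gap. First, the union bound over small bad pairs does not converge: the number of candidate pairs $(S,T)$ of the minimal admissible size $k=\lceil\epsilon^{1/2}a_i\rceil\approx\epsilon^{3/2}n$ is $\binom{|A|}{k}\binom{|B|}{k}\approx\exp\bigl(\Theta(\epsilon^{3/2}\log(1/\epsilon))\,n\bigr)$, while the probability that such a pair lands inside $A_i\times B_j$ is only $\bigl((a_i/|A|)(b_j/|B|)\bigr)^{k}\leq(1-\epsilon)^{2k}\approx\exp(-\Theta(\epsilon^{5/2})n)$, and $\epsilon^{3/2}\log(1/\epsilon)\gg\epsilon^{5/2}$; so the union bound blows up regardless of any weighting by admissible sizes.

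Second, the claim that $e_G(S,B)=(d\pm\epsilon)|S||B|$ and $e_G(A,T)=(d\pm\epsilon)|A||T|$ (marginals), transferred by Hoeffding to $e_G(S,B_j)$ and $e_G(A_i,T)$, determines $e_G(S,T)$ up to the needed accuracy is simply false: control of all row sums and all column sums does not control the bilinear quantity $e_G(S,T)$. For instance, a disjoint union of two complete balanced bipartite graphs has every degree equal to $|B|/2$ (so all marginal counts match $d=1/2$ perfectly) yet is as far from regular as possible. To make the random-slicing argument rigorous one needs a \emph{second-moment} (codegree) characterization of regularity: show that $\sum_{u,v\in A_i}\mathrm{codeg}_{B_j}(u,v)$ concentrates around $d^2a_i^2b_j$ (a single random variable per $(i,j)$, so no union bound over exponentially many sets), and invoke a defect form of the Chung--Graham--Wilson characterization to translate degree + codegree control back into a regularity parameter. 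Note also that this route typically loses a larger power of $\epsilon$ (roughly $\epsilon^{1/5}$ rather than $\epsilon^{1/2}$), so as stated the target parameter $\epsilon^{1/2}$ in the worst allowed regime $a_i/|A|=\epsilon$ is not obviously achievable by a random-slicing argument at all; you should either impose $a_i\geq\epsilon^{1/2}n$ (after which deterministic slicing suffices) or weaken the conclusion's exponent, either of which still serves the paper's actual application.
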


The following is a version of well-known Szemer\'edi's regularity lemma.

\begin{lemma} [Szemer\'edi's regularity lemma]
\label{Szemeredi} 
Let $M, M', n \in \mathbb{N}$  and $0<1/n \ll 1/M \ll 1/M', \epsilon \leq 1$. Then for any $n$-vertex graph $G$, there exists a partition of $V(G)$ into $V_0, V_1, ..., V_r$ and a spanning subgraph $G' \subseteq G$ satisfying the following: \begin{enumerate}[label=(\roman*)]
\item $M' \le r \le M,$
\item $|V_0| \le \epsilon n,$
\item $|V_i| = |V_j|$ for all $i, j \in [r],$
\item $d_{G'}(v) > d_G(v)  - (d + \epsilon)n$ for all $ v \in V (G),$
\item $e(G'[V_i]) = 0$ for all $ i \in [r],$
\item For all $i, j$ with $1 \le i \le j \le r$, the graph $G' [V_i, V_j ]$ is either empty or $(\epsilon, d_{i,j})$-regular for some $d_{i,j} \in [d,1]$.
\end{enumerate}

\end{lemma}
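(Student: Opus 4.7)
The plan is to deduce this degree form of the regularity lemma from the classical version. Choose an auxiliary parameter $\epsilon' := \epsilon^3/100$, and apply the standard Szemer\'edi regularity lemma with parameter $\epsilon'$ and a suitable lower bound $\widetilde{M}$ on the number of parts (chosen so that $1/\widetilde{M} \ll \epsilon/M$) to obtain an equitable partition $V(G) = V_0' \cup V_1' \cup \dots \cup V_r'$ satisfying $M' \leq r \leq M$, $|V_0'| \leq \epsilon' n$, $|V_i'| = |V_j'|$ for $i,j \in [r]$, and all but at most $\epsilon' \binom{r}{2}$ of the pairs $(V_i', V_j')$ being $\epsilon'$-regular. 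Call an $\epsilon'$-regular pair \emph{sparse} if its density is less than $d$, and \emph{useful} otherwise.

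Form $G'$ by removing: (a) all edges of $G$ inside each $V_i'$ with $i \geq 1$; (b) all edges of $G$ lying in a non-$\epsilon'$-regular pair $(V_i',V_j')$; and (c) all edges of $G$ lying in a sparse pair. Edges incident to $V_0'$ are kept intact. Before freezing this, pre-process the partition to enforce the per-vertex bound in (iv): call $v \in V_i'$ \emph{exceptional} if it loses more than $(\epsilon/4)n$ edges through rule (b), or if for some sparse pair $(V_i',V_j')$ it has more than $(d+\epsilon')|V_j'|$ neighbors in $V_j'$. A double-counting argument bounds the number of exceptional vertices: the total contribution from irregular pairs is at most $2\epsilon' \binom{r}{2}(n/r)^2 \leq \epsilon' n^2$, so at most $4\epsilon' n/\epsilon$ vertices fail (b); and by $\epsilon'$-regularity at most $\epsilon'|V_i'|$ vertices fail the density bound for each sparse pair, giving at most $\epsilon' n \cdot r/2$ in total. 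All of these, together with a few extra vertices needed to rebalance parts back to equal size, fit into an enlarged $V_0$ with $|V_0| \leq \epsilon n$; the remaining equitable parts are relabelled $V_1,\dots,V_r$.

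Properties (i), (ii), (iii), and (v) are immediate from the construction, and for (vi) each surviving nonempty pair is $\epsilon'$-regular (hence $\epsilon$-regular) with density $d_{i,j} \in [d,1]$. The main step is (iv). For $v \in V_0$ no edges at $v$ are removed, so $d_{G'}(v) = d_G(v)$ trivially. For $v \in V_i$ with $i \geq 1$, rule (a) removes at most $|V_i| \leq n/M' \leq (\epsilon/4)n$ edges; rule (b) removes at most $(\epsilon/4)n$ edges by the exceptional-vertex pre-processing; and rule (c) removes at most $\sum_{j:\,(V_i,V_j)\text{ sparse}} (d+\epsilon')|V_j| \leq (d+\epsilon')n \leq (d+\epsilon/4)n$ edges, again by the pre-processing. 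Summing yields at most $(d+\epsilon)n$, establishing (iv). The main technical obstacle is arranging the pre-processing so that (iv) holds for \emph{every} vertex while simultaneously preserving the equitable structure and $|V_0| \leq \epsilon n$; this is a standard but delicate cleaning step that relies on $\epsilon'$ being much smaller than $\epsilon$ and on the $\epsilon'$-regularity of each individual surviving pair.
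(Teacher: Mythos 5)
The paper does not prove this lemma (it is cited as a standard ``degree form'' of the regularity lemma), so there is no official proof to compare against, but your deduction contains a genuine gap in the bound on the exceptional set. You call $v\in V_i'$ exceptional if, \emph{for some} sparse pair $(V_i',V_j')$, $v$ has more than $(d+\epsilon')|V_j'|$ neighbours in $V_j'$, and you bound the number of such vertices by $\epsilon' n\cdot r/2$. This is not $\le \epsilon n$. With $\epsilon'=\epsilon^3/100$ you would need $r\le 200/\epsilon^2$, but $r$ can be as large as the (tower-type) upper bound coming from the classical regularity lemma applied with parameter $\epsilon'$, so $\epsilon' n r/2$ can vastly exceed $\epsilon n$. (The per-pair bound $\epsilon'|V_i'|$ is individually correct, but summing it over up to $r-1$ sparse pairs makes the union bound vacuous as soon as $r>1/\epsilon'$.) A secondary issue is your choice $1/\widetilde M\ll\epsilon/M$ for the lower bound on the number of parts, which would force $\widetilde M> M$ and so $r>M$; presumably you meant $\widetilde M:=M'$ with the hierarchy guaranteeing the Szemer\'edi upper bound is at most $M$.

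The fix is to define exceptionality for sparse pairs via a \emph{total excess} rather than a per-pair condition. Say $v\in V_i'$ is exceptional if
\[
\sum_{j:\,(V_i',V_j')\text{ sparse}}\max\bigl(0,\, e_G(v,V_j')-(d+\epsilon')|V_j'|\bigr)>\tfrac{\epsilon}{4}n.
\]
By $\epsilon'$-regularity, for each sparse pair at most $\epsilon'|V_i'|$ vertices of $V_i'$ contribute a positive term, and each contributes at most $|V_j'|$, so summing the left side over $v\in V_i'$ gives at most $\sum_j \epsilon'|V_i'||V_j'|\le \epsilon'|V_i'|n$. By Markov at most $4\epsilon'|V_i'|/\epsilon$ vertices of $V_i'$ are exceptional, hence at most $4\epsilon' n/\epsilon\le \epsilon n/10$ in total, independent of $r$. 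For a non-exceptional $v\in V_i'$ the loss through rule (c) is at most $\sum_j(d+\epsilon')|V_j'|+\frac{\epsilon}{4}n\le (d+\frac{\epsilon}{2})n$, and combined with your (correct) treatment of rules (a) and (b) this gives (iv). With this replacement the rest of your cleaning step (moving exceptional vertices to $V_0$, rebalancing, and observing that deleting $o(|V_i'|)$ vertices per part only mildly degrades the regularity constant) goes through as you describe.
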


The following two lemmas will be useful to utilise the assumption on bi-independence number.
\begin{lemma}\label{lem: highly connecting sets}
Let $0<1/n\ll \eta \ll 1$. Suppose $G$ is an $n$-vertex graph with $\widetilde{\alpha}(G)\leq \eta n$. If $W$ and $W'$ are two (not necessarily  disjoint) subsets of $V(G)$ with size at least $2\eta^{1/3} n$, then all but at most $2\eta n$ vertices $w$ in $W$ satisfies $d_{G,W'}(w)\geq \eta^{-1/2}$.
\end{lemma}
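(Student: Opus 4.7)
I would argue by contradiction. Let $B := \{w\in W : d_{G,W'}(w) < \eta^{-1/2}\}$ be the set of ``bad'' vertices in $W$, and assume $|B| > 2\eta n$. The strategy is to manufacture from $B$ and a portion of $W'$ an $(\eta n, \eta n)$-bipartite hole in $G$, contradicting the remark recorded right after the definition of $\widetilde{\alpha}$: the hypothesis $\widetilde{\alpha}(G)\leq \eta n$ forbids $K_{\eta n,\eta n}$ in $\overline{G}$, equivalently, guarantees at least one edge between any two disjoint vertex sets of size $\eta n$.

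Fix any $B^* \subseteq B$ with $|B^*| = \eta n$, which exists since $|B| > 2\eta n$. By the definition of $B$, each vertex of $B^*$ has fewer than $\eta^{-1/2}$ neighbors in $W'$, whence
\[
|N_G(B^*)\cap W'| \;\leq\; |B^*|\cdot \eta^{-1/2} \;=\; \eta^{1/2} n.
\]
Set $W^* := W' \setminus (B^* \cup N_G(B^*))$. Using $|W'|\geq 2\eta^{1/3} n$ and $\eta\ll 1$,
\[
|W^*| \;\geq\; |W'| - |B^* \cap W'| - |N_G(B^*)\cap W'| \;\geq\; 2\eta^{1/3} n - \eta n - \eta^{1/2} n \;\geq\; \eta n.
\]
By construction $B^*$ and any $\eta n$-subset of $W^*$ are disjoint and span no edges in $G$, which is the forbidden $(\eta n,\eta n)$-bipartite hole and delivers the contradiction.

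The only minor subtlety is that the lemma does not assume $W\cap W' = \emptyset$, so one must subtract $|B^*\cap W'|$ when lower-bounding $|W^*|$. Since $|B^*|=\eta n$ is of strictly lower order than $|W'|\geq 2\eta^{1/3}n$, this overlap is comfortably absorbed by the slack in the size of $W'$, and the argument goes through. Beyond this bookkeeping the proof is a direct unpacking of the definitions of $\widetilde{\alpha}(G)$ and $B$, so I do not anticipate a genuine obstacle.
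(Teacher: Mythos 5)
Your argument is correct and is essentially the paper's own proof: both take a set of bad vertices (yours of size $\eta n$, the paper's of size $2\eta n$), note that its neighborhood in $W'$ is small, and delete it from $W'$ to exhibit a forbidden bipartite hole. One small notational caution: per the paper's conventions $N_G(B^*)$ denotes the \emph{common} neighborhood of $B^*$, so the union of neighborhoods you intend should be written $N_G^1(B^*)$ (or $\bigcup_{w\in B^*}N_G(w)$), though the estimate you use makes your meaning unambiguous.
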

\begin{proof}
Suppose that the lemma does not hold, then there exists a set $Z\subseteq W$ of exactly $2\eta n$ vertices $w$ satisfying $d_{G,W'}(w) < \eta^{-1/2}$. Then the set $Z':= W' \setminus  (Z\cup N^{1}_{G}(Z))$ contains at least 
$\eta^{1/3} n - (\eta^{-1/2}+1)\cdot 2\eta n \geq 2\eta n$ vertices.
Hence $e_G(Z,Z') =0$, contradicting $\widetilde{\alpha}(G)\leq \eta n$. This proves the lemma.
\end{proof}

\begin{lemma}\label{lem: subgraph biindependence}
Let $0<1/n\ll \eta \ll \xi \ll 1$. Suppose that $G$ is an $n$-vertex graph with $\widetilde{\alpha}(G)\leq \eta n$. Then there exists a spanning subgraph $H$ of $G$ with $\widetilde{\alpha}(H)\leq 2\eta^{1/3} n$ and $\Delta(H)\leq \xi n$.
\end{lemma}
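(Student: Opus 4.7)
The plan is to obtain $H$ by random sparsification. Set $p := \xi/2$ and let $H$ be the spanning subgraph of $G$ formed by keeping each edge of $G$ independently with probability $p$. I will show that both required properties hold with high probability, so such an $H$ in particular exists.

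The maximum degree bound $\Delta(H) \leq \xi n$ is immediate: each $d_H(v)$ is a binomial random variable with mean at most $pn < \xi n/2$, and Chernoff's inequality together with a union bound over the $n$ vertices of $G$ gives $\Delta(H) \leq \xi n$ with probability $1 - o(1)$.

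For $\widetilde{\alpha}(H) \leq 2\eta^{1/3} n$, it suffices to exhibit some pair $(s_0, t_0)$ with $s_0 + t_0 > 2\eta^{1/3} n$ for which $H$ contains no $(s_0, t_0)$-bipartite hole; I will take $s_0 = t_0 = \lceil \eta^{1/3} n \rceil + 1$. The key auxiliary estimate is that every pair of disjoint sets $S, T \subseteq V(G)$ with $|S|, |T| \geq \eta^{1/3} n$ satisfies $e_G(S, T) \geq \tfrac{1}{2}\eta^{-1/6} n$. This follows by an argument essentially identical to the proof of Lemma~\ref{lem: highly connecting sets}: if more than $2\eta n$ vertices of $S$ had fewer than $\eta^{-1/2}$ neighbors in $T$, then for any set $Z \subseteq S$ of $2\eta n$ such vertices, $Z' := T \setminus N_G^{1}(Z)$ would satisfy $|Z'| \geq |T| - 2\eta^{1/2} n \geq 2\eta n$ and $e_G(Z, Z') = 0$, contradicting $\widetilde{\alpha}(G) \leq \eta n$; hence at least $|S| - 2\eta n \geq \tfrac{1}{2}\eta^{1/3} n$ vertices of $S$ each contribute at least $\eta^{-1/2}$ edges to $T$. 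Given this, for each fixed disjoint pair $S, T$ of size $\lceil \eta^{1/3} n \rceil + 1$ one has $\Pro[e_H(S, T) = 0] \leq (1-p)^{e_G(S,T)} \leq \exp(-\xi \eta^{-1/6} n / 4)$, while the number of such pairs is at most $\binom{n}{\eta^{1/3} n + 1}^{2} \leq \exp(O(\eta^{1/3} \log(1/\eta)) n)$. Since $\eta \ll \xi$, we have $\xi \eta^{-1/6} \gg \eta^{1/3} \log(1/\eta)$, so the union bound closes and the bi-independence property holds with high probability.

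The only point requiring care is precisely this parameter arithmetic: the entropy factor $\exp(\Theta(\eta^{1/3} \log(1/\eta)) n)$ enumerating candidate small bipartite holes must be beaten by the per-pair concentration $\exp(-\Theta(\xi \eta^{-1/6}) n)$, which holds provided $\eta$ is chosen sufficiently small relative to $\xi$ (for instance $\eta \leq \xi^{3}$ comfortably suffices). Beyond this there is no genuine structural obstacle, and a standard random argument delivers the lemma.
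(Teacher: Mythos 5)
Your proof is correct and follows essentially the same approach as the paper: both sparsify $G$ by retaining each edge independently with probability $\xi/2$, bound the maximum degree by Chernoff, and use the $\widetilde{\alpha}(G)\le\eta n$ hypothesis (via the argument of Lemma~\ref{lem: highly connecting sets}) to lower-bound $e_G(S,T)$ for linear-sized disjoint pairs and then union-bound over such pairs. The only differences are cosmetic: you re-derive the edge count rather than citing Lemma~\ref{lem: highly connecting sets}, use the exact vacancy probability $(1-p)^{e_G(S,T)}$ in place of Chernoff, and use a sharper entropy estimate $\binom{n}{\eta^{1/3}n}^2$ where the paper settles for the cruder $2^{2n}$ (either closes the union bound, since $\xi\eta^{-1/6}\to\infty$).
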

\begin{proof}
For each edge $e$ of $G$, we include it in $H$ independently at random with probability $\xi/2$. A standard application of Chernoff's inequality implies that, with probability at least $0.9$, we have $d_{H}(v) \leq \xi n$ for all $v\in V(G)$.
We consider two disjoint sets $A,B \subseteq V(G)$ with $|A|=|B|=\eta^{1/3}n$. 
By lemma~\ref{lem: highly connecting sets}, we have
$$\mathbb{E}[e_H(A,B)]= \frac{\xi}{2} \cdot e_G(A,B) \geq \frac{\xi}{2} \cdot \eta^{-1/2} \cdot (\eta^{1/3}n-2\eta n) \geq \eta^{-1/7}n.$$
A standard application of Chernoff's inequality implies that with probability at least $1 - \exp(-\eta^{-1/10}n)$, we have $e_H(A,B) > 0$.
By a union bound, with probability at least $1 - 2^{2n}\cdot \exp(-\eta^{-1/10}n)\geq 0.9$, we have that $e_H(A,B)>0$ for all disjoint sets $A,B \subseteq V(G)$ with $|A|=|B|=\eta^{1/3}n$. This implies that $\widetilde{\alpha}(H)\leq 2\eta^{1/3}n$. Hence, with probability at least $0.8$, $H$ has the desired properties.
\end{proof}

The following proposition from \cite{JKKO} provides a useful partition of a tree. 

\begin{proposition}\cite{JKKO}\label{prop: tree partition}
Let $n,  \Delta \in \mathbb{N} \backslash \{1\}$ and $n\geq t\geq 1$. 
Then for any rooted tree $(T,r)$ on $n$ vertices with $\Delta(T)\leq \Delta$, there exists a collection $\mathcal{S}$ of pairwise vertex-disjoint rooted subtrees such that the following holds.
\begin{enumerate}
\item $S \subseteq T(x)$ for every $(S, x) \in \mathcal{S}.$
\item $t \le |S| \le 2\Delta t$ for every $(S, x) \in \mathcal{S}.$
\item $\bigcup_{(S,x) \in \mathcal{S}} V(S) = V(T).$
\end{enumerate} 
\end{proposition}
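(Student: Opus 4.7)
The plan is to construct $\mathcal{S}$ by a greedy post-order (bottom-up) scan of $(T,r)$. At each vertex $v$ I will maintain the size of the current \emph{pending subtree} at $v$, namely $v$ together with the pending subtrees of its children that have not yet been cut off. Formally, processing the vertices in post-order, I set
\[
f(v) := 1 + \sum_{\substack{c \in D_T(v) \\ c\ \text{not yet cut}}} f(c),
\]
and whenever $f(v) \ge t$, I add the pair $(S_v, v)$ to $\mathcal{S}$ where $S_v$ is the pending subtree at $v$ (rooted at $v$), then mark every vertex of $S_v$ as cut so that $v$ contributes $0$ to its parent's count afterwards.

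The bulk of the size bounds will be immediate. By the cut rule, every piece I create satisfies $|S_v| = f(v) \ge t$. For the upper bound, at the moment $v$ is processed, each of its still-pending children $c$ must satisfy $f(c) < t$, for otherwise $c$ itself would already have been cut at its own step. Since $|D_T(v)| \le \Delta$, this gives
\[
|S_v| = f(v) \le 1 + \Delta(t-1) \le \Delta t,
\]
which is already sharper than the claimed bound $2\Delta t$.

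The only remaining case---and the only place where real work is needed---is the leftover pending subtree at the root. If $f(r) \ge t$ when $r$ is processed, the algorithm creates $(S_r, r)$ and covers all of $V(T)$. Otherwise there is a leftover pending subtree $P \ni r$ with $|P| = f(r) < t$. Since $n \ge t$, $P$ is a proper subset of $V(T)$, so some vertex $v \in P$ has a child $x \notin P$. The key observation is that $x$ must itself be the root of an existing piece $(S_x, x) \in \mathcal{S}$: if $x$ were in some piece $(S_w, w)$ with $w \ne x$, then $w$ would be a proper ancestor of $x$ and hence either $w = v$ or $w$ is an ancestor of $v$, both of which would place $v$ in $S_w$, contradicting $v \in P$. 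I then replace $(S_x, x)$ by the merged pair $(S_x \cup P, r)$. The result is a subtree of $T(r) = T$ containing $r$, with size at least $|S_x| \ge t$ and at most $|S_x| + |P| \le \Delta t + (t-1) \le 2 \Delta t$, and after this replacement every vertex of $T$ is covered.

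The main obstacle is precisely this root merge: one has to verify that the leftover pending part $P$ always shares an edge with some already-completed piece (which uses the hypothesis $n \ge t$ together with the structure of the bottom-up process) and that re-rooting the merged subtree at $r$ is legitimate because $S_x \cup P \subseteq T(r) = T$. It is also exactly this re-rooting step that forces the weaker upper bound $2\Delta t$ rather than the sharper $\Delta t$ satisfied by the generic pieces.
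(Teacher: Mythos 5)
Your greedy post-order argument is correct, and the one genuinely delicate point---that the cut child $x$ adjacent to the leftover piece $P$ must itself be the root of the piece containing it, because otherwise the piece rooted at a proper ancestor $w$ of $x$ would be a connected subtree containing $w$ and $x$ and hence the whole $w$--$x$ path, including $v\in P$---is handled properly, as is the size arithmetic for the merged root piece. Note that the paper does not prove this proposition itself but cites \cite{JKKO}; the bottom-up ``cut when $f(v)\ge t$'' construction with the undersized root leftover absorbed into a neighbouring piece is the standard proof of such tree-chopping lemmas and is essentially the argument given there.
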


The following two results are from \cite{KKOT:ta}.
The first lemma is a special case of Lemma~7.1 in \cite{KKOT:ta}.

\begin{lemma} \cite{KKOT:ta}
\label{Pack}
Let $n, \Delta, k, s \in \mathbb{N}$ and $0<1/n \ll \epsilon, 1/k \ll 1$.
 Suppose that $0< \zeta < 1$ with $s^{2/3} \le \zeta k$.
 Let $G$ be an $2n$-vertex balanced complete bipartite graph with the vertex partition $V_1$ and $V_2$.
Suppose  that the graph $L_j$  is a subgraph of $G$ with the vertex partition $X_{1}^{j}$ and $X_{2}^j$ such that $\Delta (L_j) \le \Delta$ for each $j \in [s]$. Suppose that we have 
$\sum_{j=1}^{s} e(L_j) \leq (1 - 2\zeta) kn$
and sets $W_i^j \subseteq X_i^j$ satisfies $|W_i^j| \le \epsilon n$ for all $j \in [s]$ and $i\in [2]$.
Then there exists a $k$-regular spanning subgraph $H$ of $G$ and a function $\phi$ which packs $\cL:=\{L_1, \dots, L_s\}$ into $H$ such that $\phi(X_i^j) \subseteq V_i$ and   $\phi(W_i^j) \cap \phi(W_i^{j'}) = \emptyset$ for distinct $j, j' \in [s]$ and $i \in [2]$.
\end{lemma}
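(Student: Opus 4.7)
The plan is to prove Lemma~\ref{Pack} by a random greedy packing argument. The high-level strategy is first to fix a quasi-random $k$-regular spanning subgraph $H$ of $G=K_{V_1,V_2}$, then to prescribe the images $\phi(W_i^j)$ via pre-chosen pairwise disjoint sets to enforce the separation condition, and finally to embed each $L_j$ into $H$ sequentially by uniformly random bipartition-respecting bijections, exploiting the $2\zeta$-fraction slack in the total edge count together with concentration to guarantee edge-disjointness.

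For the construction of $H$, I would take $H$ to be the union of $k$ independent uniformly random perfect matchings in $K_{V_1,V_2}$, cleaning up the few (in expectation $O(k^2)$) duplicated edges by standard switching. With high probability the resulting $H$ is a simple $k$-regular bipartite spanning subgraph of $G$, and it is quasi-random in the sense that $e_H(A,B)=(1\pm\epsilon)k|A||B|/n$ whenever $A\subseteq V_1$, $B\subseteq V_2$ satisfy $|A|,|B|\ge \epsilon n$. By K\"onig's edge-coloring theorem, $H$ then decomposes into $k$ perfect matchings, which provides a convenient alternative viewpoint on the packing.

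To enforce the separation of the $\phi(W_i^j)$, note that $\sum_{j\in [s]}|W_i^j|\le s\epsilon n$, which is $o(n)$ because under the hierarchy $1/n\ll \epsilon, 1/k \ll 1$ combined with $s^{2/3}\le \zeta k$, the product $s\epsilon$ is negligible. Greedily pick pairwise disjoint sets $Y_i^j\subseteq V_i$ of size $|W_i^j|$ for each $j\in [s]$ and $i\in [2]$. Now process $L_1,L_2,\dots,L_s$ in order; for each $L_j$, choose $\phi_j$ uniformly among bipartition-respecting injections $V(L_j)\to V_1\cup V_2$ subject to $\phi_j(W_i^j)=Y_i^j$ and $\phi_j(X_i^j\setminus W_i^j)\subseteq V_i\setminus Y_i^j$.

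The main obstacle is to show that after embedding $L_1,\dots,L_{j-1}$ the residual graph $H_j:=H-\bigcup_{j'<j}\phi_{j'}(L_{j'})$ still has sufficient density to accommodate $\phi_j$ with all of its edges landing in $H_j$. This is handled by martingale concentration: exposing the choices of $\phi_1,\dots,\phi_{j-1}$ coordinate by coordinate yields a Doob martingale that is $O(\Delta)$-Lipschitz per step, since swapping a single coordinate in one bijection changes at most $O(\Delta)$ used edges. Azuma's inequality then shows that at any given vertex or any linear-size pair $(A,B)$, the number of used $H$-edges concentrates around its mean, which is at most $(1-2\zeta)k$ per vertex by $\sum_j e(L_j)\le(1-2\zeta)kn$; the assumption $s^{2/3}\le \zeta k$ is what forces the Azuma error to be much smaller than the slack $\zeta k$. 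Hence at every vertex $H_j$ retains $(2\zeta-o(1))k$ free edges, and together with the quasi-randomness of $H$ this allows $\phi_j$ to be realised vertex-by-vertex, using $\Delta(L_j)\le \Delta$ to cap the number of constraints imposed at each step. A union bound over $j\in [s]$ closes the induction and yields the required packing.
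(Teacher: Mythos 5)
This lemma is not proved in the paper: it is imported verbatim, as a special case of Lemma~7.1 of Kim, K\"uhn, Osthus and Tyomkyn~\cite{KKOT:ta}, so there is no in-paper proof to compare against. Your proposal, though, has a structural error that would sink it independently. You fix $H$ up front as a random $k$-regular spanning subgraph of $K_{n,n}$ and then try to embed each $L_j$ into the shrinking residual $H_j:=H-\bigcup_{j'<j}\phi_{j'}(L_{j'})$ by a random greedy, vertex-by-vertex process. But in the hierarchy $1/n\ll\epsilon,1/k\ll 1$ the parameter $k$ is a constant independent of $n$, so $H$ and every $H_j$ are sparse bipartite graphs of bounded degree. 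The $L_j$ this lemma is applied to in the paper are near-spanning forests with $\Delta(L_j)\geq 2$, and one cannot place a near-spanning tree or path into a bounded-degree host vertex-at-a-time: once most of a side has been occupied, a vertex whose image must be an $H_j$-neighbour of an already-placed vertex has only $O(k)$ candidate images, and with constant probability all are taken, so the process stalls. Quasi-randomness of $H$ at linear scale ($e_H(A,B)=(1\pm\epsilon)k|A||B|/n$ for $|A|,|B|\geq\epsilon n$) gives no control at this local, bounded scale. The statement should be read with $H$ as part of the \emph{conclusion}: pack the $L_j$ edge-disjointly into the complete bipartite host $G$ while controlling per-vertex congestion, and only afterwards top up the resulting nearly-regular union to a $k$-regular $H$ using the $2\zeta kn$ slack. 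Pre-committing to a sparse $H$ discards exactly the freedom that makes the lemma true.

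The concentration step is also quantitatively unsound. Writing $D(v)=\sum_{j}d_{L_j}(\phi_j^{-1}(v))$ for the congestion at $v$, each summand lies in $[0,\Delta]$, so even with fully independent uniform bijections Azuma/Hoeffding gives $\Pr[D(v)>\mathbb{E}D(v)+t]\leq\exp(-2t^2/(s\Delta^2))$, and a union bound over the $2n$ vertices forces $t\gtrsim\Delta\sqrt{s\log n}$, which is unbounded in $n$; meanwhile the slack $\zeta k$ is a constant. The hypothesis $s^{2/3}\leq\zeta k$ bounds $\sqrt{s}$, not the $\sqrt{\log n}$ factor, so it cannot rescue the union bound (and your ``coordinate-by-coordinate'' exposure is worse still, giving $\Delta\sqrt{sn}$). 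Achieving congestion at most $k$ at \emph{every} vertex therefore requires a semi-random or greedy-with-repair construction of the $\phi_j$ rather than independent uniform ones; that is precisely the technical content your sketch elides.
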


\begin{theorem} [Blow-up lemma for approximate decompositions \cite{KKOT:ta}]
\label{Blowup}
Let $n,k \in \mathbb{N} $ and $0<1/n \ll \epsilon \ll \alpha, d, d_0, 1/k \le 1$. Let $s \in \mathbb{N}$ be a number such that $s \le (1-\frac{\alpha}{2})\frac{d n}{k}$.
Suppose  that the following properties hold.

  \begin{enumerate}[label=\text{\rm (A\arabic*)}$_{\ref{Blowup}}$]
\item\label{Blowup A1} $G$ is a $(\epsilon, d)$-super-regular graph with the vertex partition $V_1$ and $V_2$.
\item\label{Blowup A2} $\mathcal{H} = \{H_1, \dots ,H_s\}$, where each $H_i$ is an $k$-regular bipartite graph with the vertex partition $X_1$ and $X_2$.
\item\label{Blowup A3}  For all $j \in [s]$ and $ i \in [2],$ there is a set $Y_i^j \subseteq X_i$ with $|Y_i^j| \le \epsilon n$ and for each $y\in Y_i^j$, there is a set $A_y^i \subseteq V_i $ with $|A_y^i| \ge d_0 n.$
\item\label{Blowup A4}  $\Gamma$ is a graph with $V(\Gamma) \subseteq [s] \times V(G)$ and $\Delta(\Gamma) \le (1-\alpha)d_0n$ such that for each $(i,x) \in V(\Gamma)$ and $i' \in [s]$, we have $|\{x' \in V(G): (i',x') \in N_\Gamma(i,x) \}| \le k^2.$
Moreover, for each $i \in [s]$ and $j \in [2]$, we have $|\{ (i,x) \in V(\Gamma) : x \in X_j\}| \le \epsilon n.$
\end{enumerate}
Then there exists a function $\phi$ packing $\mathcal{H}$ into $G$ such that for all $j,j' \in [s]$ and $i \in [2]$\begin{enumerate}[label=\text{\rm (B\arabic*)}$_{\ref{Blowup}}$]
\item\label{Blowup B1}  $\phi(V(H_j) \cap X_i) \subseteq V_i.$ 
\item\label{Blowup B2}  $\phi(y) \in A_y^i$ for all $y \in \bigcup_{j\in [s]} Y_i^j$. 
\item\label{Blowup B3}  For all $(j,x)(j',y) \in E(\Gamma),$ we have  $\phi(x) \ne \phi(y).$
\end{enumerate}
\end{theorem}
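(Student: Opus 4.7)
The plan is to embed $H_1,\dots,H_s$ sequentially into $G$, at each step $j$ applying a randomized version of the classical Komlós-Sárközy-Szemerédi blow-up lemma to embed $H_j$ into the residual graph $G^{(j)}:=G\setminus\bigcup_{j'<j}\phi(E(H_{j'}))$, while maintaining the invariant that $G^{(j)}$ remains $(\epsilon_j,d_j)$-super-regular between $V_1$ and $V_2$ with $d_j\approx d-(j-1)k/n$. At the end we will have embedded all $H_j$'s edge-disjointly, which is precisely a packing of $\mathcal{H}$ into $G$ satisfying~\ref{Blowup B1}.

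At step $j$, each vertex $x\in V(H_j)\cap X_i$ carries an allowed target set $T_x\subseteq V_i$ that is built by intersecting with $A_x^i$ if $x\in Y_i^j$ and by removing the $\Gamma$-forbidden set $F_x:=\{\phi(y):(j,x)(j',y)\in E(\Gamma),\ j'<j\}$ otherwise. By the hypotheses, $|A_x^i|\ge d_0 n$ and $|F_x|\le\Delta(\Gamma)\le(1-\alpha)d_0n$, so $|T_x|\ge\alpha n$; moreover the number of restricted vertices in each $X_i$ is at most $|Y_i^j|+\epsilon n\le 2\epsilon n$ by the last size condition on $\Gamma$. The classical blow-up lemma with image restrictions thus embeds $H_j$ into $G^{(j)}$ respecting \ref{Blowup B1}--\ref{Blowup B3}. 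To preserve the super-regularity invariant the embedding at each step is sampled from a distribution over valid embeddings engineered to be ``quasi-randomly distributed'' across the edges of $G^{(j)}$, so that Chernoff/Azuma concentration together with Proposition~\ref{prop: subgraph reg} bounds the per-step degradation of the regularity parameters.

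The principal obstacle is controlling the cumulative loss of regularity across $s\le(1-\alpha/2)dn/k$ iterations: when $s=\Theta(n)$, naive summation of per-step $\epsilon^{1/2}$-type losses yields an unacceptable total. The standard remedy, which I would follow, is to process the $H_j$'s in $O(\log n)$ phases of roughly constant edge budget and to re-certify super-regularity between phases via a randomized edge re-routing or resampling argument---this is the nibble-style approach developed in~\cite{KKOT:ta}. A secondary subtlety is that the codegree condition $|\{x':(i',x')\in N_\Gamma(i,x)\}|\le k^2$ must be invoked to ensure that the images in $F_x$ from any single previously-embedded $H_{j'}$ are spread over few vertices, so that the bound $|F_x|\le\Delta(\Gamma)$ is actually effective and the $\Gamma$-constraints do not interact pathologically with the randomization used to maintain quasi-randomness of $G^{(j)}$.
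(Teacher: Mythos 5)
This theorem is not proved in the paper at all; it is imported as a black box from Kim--K\"uhn--Osthus--Tyomkyn \cite{KKOT:ta} (as is Lemma~\ref{Pack}, the companion result that packs trees into internally regular graphs). There is therefore no ``paper's own proof'' to compare your sketch against; the authors simply invoke Theorem~\ref{Blowup} in Step~3 of the proof of Lemma~\ref{lem: embed one}.

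As a free-standing sketch, what you wrote is a reasonable caricature of the strategy used in \cite{KKOT:ta}: embed the $H_j$'s one at a time with a randomized blow-up lemma, keep track of target sets coming from $A_y^i$ and the $\Gamma$-forbidden images, and argue that quasi-random embeddings degrade super-regularity slowly enough to survive $\Theta(n/k)$ rounds. But the hard content is exactly the part you defer to ``the nibble-style approach developed in~\cite{KKOT:ta}''; without specifying the phase structure, the re-randomization device, and the concentration arguments that keep the residual $(\epsilon_j,d_j)$-super-regular uniformly in $j$, the obstacle you correctly identified remains unresolved. Two smaller slips: when $x\in Y_i^j$ the correct lower bound after removing $F_x$ is $|T_x|\ge\alpha d_0 n$, not $\alpha n$ (harmless, but worth getting right since $d_0$ here is $(d-\epsilon^{1/2})^\Delta$ in the application); and both the $A_x^i$-intersection and the $F_x$-removal must be applied simultaneously to a given $x$, not as an either/or --- the codegree condition $|\{x':(i',x')\in N_\Gamma(i,x)\}|\le k^2$ controls how many forbidden images come from each past round and is needed for the concentration step, not merely to make $|F_x|\le\Delta(\Gamma)$ ``effective.'' None of this affects the paper, which uses the theorem only as a citation.
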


\section{Proof of Theorem \ref{thm:main}}\label{sec: main proof}

In this section, we prove our main theorem assuming the following lemma which will be proved in Section~\ref{sec: main lemma}. This lemma states that if $G$ admits a certain $\epsilon$-super-regularity partition and $\widetilde{\alpha}(G) = o(|V(G)|)$, then we can find an approximate decomposition of $G$ into arbitrary bounded degree $(1-o(1))|V(G)|$-vertex trees. Here, $G$ consists of sets $V_{s,i}$ and $U_{s,i}$ which form an $\epsilon$-super-regular matching structure. The reduced graph for this $\epsilon$-regular partition is not connected, but the condition on $\widetilde{\alpha}(G[U])$ provides a connection
necessary to embed trees.

\begin{lemma}
\label{lem: embed one}
Suppose $0<1/n \ll 1/M \ll 1/r, \epsilon \ll d \ll \nu, 1/\Delta <1$.
Let $G$ be a graph with a vertex partition $U \cup V$ and let 
$\{ U_{s,i} : (s,i)\in [r]\times [2]\}$ be a partition of $U$ and 
$\{V_{s,i}: (s,i) \in [r]\times [2]\}$ be a partition of $V$.
Let $\mathcal{T}$ be a collection of trees on at most $2(1-\nu)rn$ vertices with maximum degree at most $\Delta$.
Assume that the following properties hold. 
\begin{enumerate}[label=\text{\rm (A\arabic*)}$_{\ref{lem: embed one}}$]
\item \label{lem: embed one 1} For each $s\in [r]$, each of four graphs 
$G[V_{s,1},V_{s,2}]$, $G[U_{s,1},U_{s,2}]$, $G[U_{s,1}, V_{s,2}]$ and $G[V_{s,1}, U_{s,2}]$ are $(\epsilon, d)$-super-regular with $|V_{s,1}| = |V_{s,2}| = n$ and $|U_{s,1}|, |U_{s,2}| \geq \epsilon n$. 

\item\label{lem: embed one 2}  $\widetilde{\alpha}(G[U])\leq M^{-6}n$.

\item\label{lem: embed one 3}   $e(\mathcal{T}) \le (1-\nu)rdn^2.$
\end{enumerate}
Then there exists a map $\phi$ packing $\mathcal{T}$ into $G$ such that  $d_{\phi(\mathcal{T})}(u) \leq  \Delta M$ for each $u\in U$.
\end{lemma}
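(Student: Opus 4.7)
The plan is to decompose each tree into many small rooted subtrees, randomly assign them to the $r$ super-regular slots, pre-embed the ``skeleton'' (subtree roots together with their parents in $T$) inside $U$ using the bi-independence hypothesis, and conclude by applying Theorem~\ref{Blowup} inside each slot.

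First, I would apply Proposition~\ref{prop: tree partition} to each $T\in\cT$ with a parameter $t$ of order $n/M^{10}$, producing a collection of pairwise vertex-disjoint rooted subtrees $(S,x_S)$ with $|S|\in[t,2\Delta t]$. Each $T$ then splits into at most $2rn/t$ subtrees, giving a corresponding collection of ``breaking edges'' $a_T(x_S)x_S$. Using the canonical 2-colouring of $T$, I would uniformly assign each subtree to some $(s,i)\in[r]\times[2]$ consistent with this 2-colouring (so that every edge of $T$ is oriented between sides~1 and~2 of the same slot); a standard Chernoff argument shows that with positive probability every slot $s$ receives subtrees with total edge-count at most $(1-\nu/2)dn^2$, and the number of pre-embedded vertices landing in each $U_{s,i}$ is at most $\epsilon^2 n$.

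Next, I would process the trees one by one, and within each $T$ in a root-first order of the subtree-tree, greedily pre-embed each root $x_S$ and its parent $a_T(x_S)$ into the assigned sets $U_{s(S),i(S)}$ and $U_{s(S'),i'}$ respectively, where $S'$ is the parent subtree in the decomposition of $T$. Since these breaking edges are cross-slot, they must be realised inside $G[U]$. Lemma~\ref{lem: highly connecting sets} with $\eta=M^{-6}$ guarantees that any two subsets of $U$ of size at least $2M^{-2}n$ are joined by many edges (each vertex on one side has at least $M^3$ neighbours on the other), so the greedy procedure succeeds, and with Azuma-type concentration we can additionally guarantee that every $u\in U$ is used at most $\Delta M/2$ times at this stage, which will yield $d_{\phi(\cT)}(u)\le\Delta M$ in the end.

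Finally, for each slot $s\in[r]$, the four $(\epsilon,d)$-super-regular pairs on $(V_{s,1}\cup U_{s,1})\times(V_{s,2}\cup U_{s,2})$ combine into a single super-regular bipartite graph, which after deleting the pre-embedded vertices remains super-regular by Propositions~\ref{prop: super subgraph} and~\ref{prop: subgraph reg}. I would invoke Theorem~\ref{Blowup} inside each slot to complete the packing, treating each pre-embedded root or parent as an image-restricted vertex per hypothesis~\ref{Blowup A3} (with target set the single pre-assigned vertex). The main obstacle is the joint calibration of the skeleton pre-embedding and the blow-up step: the skeleton must be small enough per slot to preserve super-regularity and to fit within the $|Y_i^j|\le\epsilon n$ allowance of~\ref{Blowup A3}, but spread out enough to achieve the $\Delta M$ multiplicity bound on each $u\in U$. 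Setting up the conflict graph $\Gamma$ in~\ref{Blowup A4} so that distinct subtrees do not collide at a common host vertex, and balancing the parameters $t$, $M$, and $\epsilon$ so that hypotheses \ref{Blowup A3} and \ref{Blowup A4} hold simultaneously, is the key technical point.
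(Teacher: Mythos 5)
Your high-level plan is the same as the paper's: decompose each tree into small rooted subtrees via Proposition~\ref{prop: tree partition}, randomly assign them to slots respecting the $2$-colouring, greedily pre-embed a skeleton into $U$ using bi-independence, and finish inside each slot with Theorem~\ref{Blowup}. But your execution of the Blowup step has a genuine gap. You propose to delete the pre-embedded host vertices from the combined graph and then ``treat each pre-embedded root or parent as an image-restricted vertex per hypothesis~\ref{Blowup A3} (with target set the single pre-assigned vertex).'' This cannot work: hypothesis~\ref{Blowup A3} requires $|A_y^i|\ge d_0 n$, so singleton target sets are not allowed, and in any case the singleton would no longer lie in the graph after you delete those host vertices. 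The correct move, which the paper uses, is to remove the skeleton from the \emph{trees} (so skeleton vertices are not among the vertices handed to Blowup) and instead image-restrict their tree-neighbours that lie in $F_T$, with target sets $A_y = N_{G,V_{s,i}}(\phi(N_T(y)\cap V(C_T)))$. Making these targets large — the paper secures $|A_y|\ge (d-\epsilon^{1/2})^\Delta n$ via property~\ref{phi2} — requires bookkeeping during the greedy phase that your outline omits. You also skip the step of converting the forests into $k$-regular bipartite graphs (the paper's use of Lemma~\ref{Pack} to get the $H_{s'}$'s and the conflict graph $\Gamma$), which is necessary because Theorem~\ref{Blowup} does not accept arbitrary forests.

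A second, subtler issue is that your $2$-vertex skeleton $\{x_S,a_T(x_S)\}$ is thinner than the paper's three-level skeleton $C_T=T\bigl[D_{S}^{\le 2}(x)\bigr]$, and this is exactly where the lookahead hides. Lemma~\ref{lem: highly connecting sets} asserts that all but a few vertices of one side have many neighbours in the other; it does not say that a \emph{given} vertex does. Your cross-slot edge $a_T(x_S)x_S$ therefore requires $\phi(a_T(x_S))$ to have been chosen so it has a neighbour in $U_{s(S),\cdot}$, and since $a_T(x_S)$ may be the parent of several roots in several slots, this is a multi-way lookahead that must be enforced when $\phi(a_T(x_S))$ is selected. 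The paper makes this local by keeping the root and its children in the parent's slot and jumping to the new slot only at the grandchild level, deliberately placing the children at vertices with $\ge M^2$ neighbours in $W'\subseteq U_{s,3-i}$. Finally, merging the four pairs into one graph on $(V_{s,1}\cup U_{s,1})\times(V_{s,2}\cup U_{s,2})$ would let bulk vertices land in $U$ and contribute to $d_{\phi(\cT)}(u)$, jeopardising the required bound $d_{\phi(\cT)}(u)\le\Delta M$; the paper confines Blowup to $G[V_{s,1},V_{s,2}]$ so that only the at most $M$ skeleton images at each vertex of $U$ contribute.
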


We start the proof of Theorem \ref{thm:main}.
For given $\nu$ and $\alpha$, we choose constants $n_0, \eta, \xi,\epsilon,  t$ such that 
\begin{align}\label{eq: hierarchy}
0 < \frac{1}{n_0} \ll \eta  \ll \epsilon  \ll  \xi \ll \frac{1}{t} \ll \nu, \alpha <1.
\end{align}
Let $n \ge n_0$. 
By deleting exactly $\xi n$ vertices with degree furthest from $\alpha n$, we can assume that $G$ is an $(1-\xi)n$-vertex graph such that every vertex $v \in V(G)$ satisfies $d_{G}(v) = (\alpha\pm 2\xi) n$. By Lemma~\ref{lem: subgraph biindependence}, we can find a spanning subgraph $H$ of $G$ with $\widetilde{\alpha}(H)\leq 2\eta^{1/3}n$ and $\Delta(H)\leq \xi n$.
By replacing $G$ with $G-E(H)$, assume that $G$ and $H$ are edge-disjoint graphs and 
\begin{align}\label{eq: basic assumption}
\widetilde{\alpha}(H) \leq 2\eta^{1/3} n \text{ and }d_{G}(v) = (\alpha \pm 3\xi)n \text{ for all $v \in V(G)$.}
\end{align}
Suppose that $\mathcal{T}$ is a collection of trees satisfying $(i)$ and $(ii)$. Now we aim to construct (not disjoint) sets $U^1,\dots, U^\kappa, V^1,\dots, V^{\kappa}$ and edge-disjoint subgraphs $G_1, \dots, G_{\kappa}$ of $G$.  We will also
 partition $\mathcal{T}$ into $\kappa$ subcollections of trees $\mathcal{T}_1, \dots, \mathcal{T}_\kappa$, and pack the trees of $\mathcal{T}_i$ into $G_i \cup H[U^i].$ \vspace{0.2cm}

\noindent {\bf Step 1. Partitioning $G$.}
First, we will partition $G$ into graphs with 
appropriate structure which are suitable for applications of Lemma \ref{lem: embed one}. 
We apply Szemer\'edi's regularity lemma (Lemma \ref{Szemeredi})  with $(\epsilon, \frac{1}{t}, \epsilon^{-1}, \eta^{-1/100})$ playing the role of $(\epsilon, d, M', M)$ to obtain a partition $V_0', \dots, V_{r}'$ of $V(G)$ and a spanning subgraph $G' \subseteq G$ satisfying the following.

\begin{enumerate}[label=\text{\rm (R\arabic*)}]
\item \label{R1} $\eta^{1/100} \leq \frac{1}{r} \leq \epsilon,$
\item \label{R2} $|V_0'| \le \epsilon n,$
\item \label{R3} $|V_i'| = |V_j'| = (1\pm \epsilon)\frac{n}{r}$ for all $i, j \in [r],$
\item \label{R4}   $d_{G'}(v) > d_G(v)  - \frac{2 n}{t}$ for all  $ v \in V (G)$
\item \label{R5} $e(G'[V'_i]) = 0$ for all $ i \in [r],$
\item \label{R6} for any $i,j\in [r]$, the graph $G' [V'_i, V'_j ]$ is either empty or $(\epsilon, d_{i,j})$-regular for some $d_{i,j} = d_{j,i} \in [\frac{1}{t},1]$.
\end{enumerate}
Let $R$ be a reduced graph with
$$V(R)= [r] \text{ and } E(R):=\{ ij: e_{G'}(V'_i,V'_j)>0 \}.$$
As $ij\in E(R)$ if and only if $G'[V'_i,V'_j]$ is $(\epsilon, d_{i,j})$-regular with $d_{i,j}\geq 1/t$, for each $i\in [r]$, we have
\begin{eqnarray}\label{eq: sum dij}
\sum_{j\in N_R(i)} d_{i,j} \hspace{-0.2cm} &=& \hspace{-0.2cm} \sum_{j \in N_{R}(i)} \left(\frac{e_{G'}(V'_i,V'_j)}{|V'_i||V'_j|} \pm \epsilon\right) \stackrel{\ref{R5}}{=} \frac{\sum_{v \in V'_i} d_{G', V(G)\setminus V_0}(v) }{|V'_i|^2} \pm \epsilon r \nonumber \\
 \hspace{-0.2cm}&\stackrel{\ref{R2},\ref{R4}}{=}&  \hspace{-0.2cm}  \frac{1}{|V'_i|^2} \sum_{v\in V'_i} \left(d_{G}(v) \pm \frac{3n}{t}\right) \pm \epsilon r 
\stackrel{\eqref{eq: basic assumption}}{=} \frac{ ( \alpha \pm 3\xi \pm \frac{3}{t}) n }{|V'_i| } \pm \epsilon r
 \stackrel{\ref{R3}}{=} \left(\alpha \pm \frac{5}{t}\right) r 
\end{eqnarray}

Now we will find edge-disjoint subgraphs of $G'$ each of which admits $\epsilon$-regular matching structure.
For each $ij\in E(R)$, letting $t_{i,j}:=  \lfloor d_{i,j}\cdot t\rfloor $, we use Lemma~\ref{lem: reg decomp}  with $G[V'_i, V'_{j}], t_{i,j} ,  \frac{1}{d_{i,j}t}$ playing the roles of $G, s, p_1=\dots= p_{s}$, respectively, to obtain edge-disjoint subgraphs $E^1_{i,j}, \dots, E^{t_{i,j}}_{i,j} $
of $G'[V'_i,V'_{j}]$. For each $ij\in E(R)$ and $\ell\in [t_{i,j}]$, 
\begin{align}\label{eq: regularity1}
\text{\em $E^\ell_{i,j}$ is $\left(2\epsilon,\frac{1}{t}\right)$-regular.}
\end{align}
We will take an appropriate unions of these graphs $E^{\ell}_{i,j}$ to form $\epsilon$-regular matching structures.
Let $R^*$ be a multi-graph obtained by replacing each edge $ij$ of $R$ with $t_{i,j}$ edges $e_{i,j}^1,\dots, e_{i,j}^{t_{i,j}}$ between the vertices $i$ and $j$.
Let $\Phi$ be a map from $E(R^*)$ to $\{E_{i,j}^{\ell}: \ell\in [t_{i,j}], ij\in E(R)\}$ such that $\Phi(e_{i,j}^{\ell}) = E_{i,j}^{\ell}$.
For each $i \in [r]$, we have
\begin{eqnarray}\label{eq: R* regular}
d_{R^*}(i) &=&\sum_{j \in N_{R}(i) } t_{i,j}  = \sum_{j\in N_{R}(i)} (d_{i,j} t \pm 1) 
\stackrel{\eqref{eq: sum dij}}{=}
\left(\alpha \pm \frac{6}{t} \right)tr.
\end{eqnarray}
Let
\begin{align}\label{eq: kappa def}
\kappa : = (\alpha - t^{-1/3}) tr.
\end{align}
By applying Vizing's theorem to $R^*$, we obtain $(\alpha+\frac{6}{t})tr+t$ edge-disjoint (possibly empty) matchings covering all edges of $R^*$.
By \eqref{eq: R* regular} and the pigeonhole principle, at least $\kappa$ matchings contain at least $(1-t^{-1/3})\frac{r}{2}$ edges. Let $M_1,\dots, M_{\kappa}$ be edge-disjoint
matchings of $R^*$ of size at least $(1-t^{-1/3}) \frac{r}{2}$, thus for each $i\in [\kappa]$,
\begin{align}\label{eq: matching size}
(1-t^{-1/3}) \frac{r}{2} \leq |E(M_k)| \leq \frac{r}{2}.
\end{align}

For $k\in [\kappa]$, we write $ij\in E(M_k)$ if $M_k$ contains one of $e_{i,j}^1,\dots, e_{i,j}^{t_{i,j}}$.
For each $k\in [\kappa]$, we define a graph $G_k$ with 
$$V(G_k) := \bigcup_{i\in V(M_k)} V'_i \enspace \text{and} \enspace E(G_k):= \bigcup_{ e\in E(M_k)} \Phi(e).$$

For each $k\in [\kappa]$ and $ij\in E(M_k)$, apply Proposition~\ref{prop: super subgraph} to obtain sets $W^k_i\subseteq V'_i$ and $W^k_j\subseteq V'_j$ such that both $W^k_i$ and $W^k_j$ have size $(1\pm 3\epsilon)\frac{n}{r}$ and 
$G_k[W^k_i, W^k_j]$ is $(3\epsilon,\frac{1}{t})$-super-regular.
We further apply Proposition~\ref{prop: reg partition} to $G_k[W^k_i, W^k_j]$ for each $ij\in E(M_k)$ with $a_1=b_1= n_{\bullet}:=\frac{(1-\epsilon^{1/20})n}{r}$ and $a_2= |W^{k}_i|- n_{\bullet}, b_2= |W^k_{j}|-n_{\bullet}$. This yields a partition $V^k_i \cup U^k_i$ of $W^k_i$ and a partition $V^k_j\cup U^k_j$ of $W^k_j$ satisfying the following.

\begin{equation}\label{eq: regularity 2}
\begin{minipage}{0.9\textwidth} \em
For each $ij \in E(M_k)$, the graphs $G_k[V^k_i, V^k_j], G_k[V^k_i, U^k_j], G_k[U^k_i, V^k_j]$ and $G_k[U^k_i, U^k_j]$ are all $(\epsilon^{1/3},\frac{1}{t})$-super-regular and 
$|V^k_i|=|V^k_j|= n_{\bullet}$.
\end{minipage}
\end{equation}
As $M_k$ is a matching, $V^k_i, U^k_i$ are well-defined for each $i\in V(M_k)$, and we further have 
\begin{align}\label{eq: U sizes}
|U^k_i| \geq (1-3\epsilon)\frac{n}{r} - n_{\bullet} \geq \frac{\epsilon^{1/20}n}{2r}.
\end{align}
Note that the two sets $V^k_i$ and $V^{k'}_i$ (and similarly $U^k_i$ and $U^{k'}_i$) are in general different for $k\neq k'$.
Let $V^k:= \bigcup_{i\in V(M_k)} V^k_i$ and $U^k:= \bigcup_{i\in V(M_k)} U^k_i$. \vspace{0.2cm}

\noindent {\bf Step 2. Applications of  Lemma~\ref{lem: embed one}.}
We arbitrarily partition $\mathcal{T}$ into $\kappa$ collections $\mathcal{T}_{1},\dots, \mathcal{T}_{\kappa}$ such that for all $k\in [\kappa]$, we have 
\begin{align}\label{size1}
e(\mathcal{T}_{k}) < \frac{1}{\kappa}(1- \frac{2\nu}{3})e(G) \le \frac{1}{2rt}(1-\frac{\nu}{2})n^2.
\end{align}
We are ready to construct a desired embedding using Lemma~\ref{lem: embed one}.
For each $k \in [\kappa]$, we will pack 
$\mathcal{T}_k$ into $G_k\cup H[U^k]$. Since $G_1,\dots, G_{\kappa}$ are edge-disjoint, we only have to be careful about disjointness of edges whose images are in $H$.
Suppose that for some $k\in [\kappa]$, we have constructed a function $\phi_{k-1}$ packing $\bigcup_{k'\in [k-1]} \mathcal{T}_{k'}$ into $\bigcup_{k'\in [k-1]} \left(G_{k'} \cup H[U^{k'}]\right)$ satisfying the following, 
\begin{enumerate}
\item[(G1)]\hspace{-0.2cm}$_{k-1}$ $\Delta(H_{k-1})\leq \eta^{-1/70} (k-1)$, where $E(H_{k-1}) :=  \bigcup_{k'\in [k-1]} \phi_{k-1}(\mathcal{T}_{k'})\cap E(H)$.
\end{enumerate}\vspace{0.2cm}
Note that $H_{k-1}$ is the graph consisting of all edges of $H$ which are already used for previous packing.
Observe that (G1)$_{0}$ trivially holds with an empty packing $\phi_0$.
Let $m:= |M_k| \geq (1-t^{-1/3}) \frac{r}{2}$ and  $E(M_k) =:\{ j_{s,1} j_{s,2} : s\in [m]\}$.
For each $(s,i) \in [m]\times [2]$, for brevity, let  
$$V^k_{s,i} := V^k_{j_{s,i} } \enspace \text{and} \enspace U^{k}_{s,i}:= U^k_{j_{s,i}}.$$
Let 
$$G_k^*:= G_{k}[U^k\cup V^k] \cup \left(E(H[U^k]) - E(H_{k-1})\right).$$
We apply Lemma~\ref{lem: embed one} with the following objects and parameters to pack trees in $\cT_{k}$ into $G^*_k$.\newline

\noindent
{ 
\begin{tabular}{c|c|c|c|c|c|c|c|c|c|c|c|c|c|c}
object/parameter& $G_k^*$  & $V_{s,i}^{k}$ & $U_{s,i}^k$ & $ m $ & $n_{\bullet}$ & $\epsilon^{1/6}$  &   $\mathcal{T}_{k}$ & $\Delta$ & $\frac{1}{t}$  & $\frac{\nu}{4}$ & $\eta^{-1/80}$
\\ \hline
playing the role of & $G$   & $V_{s,i}$ & $U_{s,i}$ & $r$ & $n$ & $\epsilon$  & $\mathcal{T}$ & $\Delta$ & $d$  & $\nu$  & $M$ \\ 
\end{tabular}
}\newline \vspace{0.2cm}

For this application, we need to check the conditions of Lemma~\ref{lem: embed one} hold. By \eqref{eq: hierarchy} and \ref{R1},
we have the hierarchy of constants required in Lemma~\ref{lem: embed one} and for each $(s,i)\in [r]\times [2]$, we have $|V^k_{s,i}|= n_{\bullet}$ and $|U^k_{s,i}| \geq \epsilon^{1/10} n_{\bullet}$ by \eqref{eq: U sizes}.
Moreover, each tree $T$ in $\cT_k$ contains at most $(1-\nu)n 
 \leq 2(1-\frac{\nu}{3}) m n_{\bullet}$ vertices by \eqref{eq: matching size} since $\frac{1}{t} \ll \nu$.
 
 Now we show that \ref{lem: embed one 1}-\ref{lem: embed one 3} hold.
Using (G1)$_{k-1}$ and the fact that $k\leq \kappa$, we have
\begin{eqnarray}\label{eq: used up max degree}
\Delta(H_{k-1}) \leq \eta^{-1/70} k \stackrel{\eqref{eq: kappa def}}{\leq} \eta^{-1/60}.
\end{eqnarray}
Then Proposition~\ref{prop: subgraph reg} together with  \eqref{eq: regularity 2} and \eqref{eq: used up max degree} implies that
$G_k^*[U^k_{s,1}, U^k_{s,2}]$, $G_k^*[V^k_{s,1}, U^k_{s,2}]$ and $G_k^*[U^k_{s,1}, V^k_{s,2}]$ are all $(\epsilon^{1/6},\frac{1}{t})$-super-regular.
Since $H_{k-1}$ is edge-disjoint from $G_k$, and $U^k$ is disjoint from $V^k$, we have $G_k^*[V^k_{s,1}, V^k_{s,2}] = G_k[V^k_{s,1}, V^k_{s,2}]$, thus it is $(\epsilon^{1/3},\frac{1}{t})$-super-regular by \ref{eq: regularity 2}.
We conclude that  \ref{lem: embed one 1} holds.
By \eqref{eq: basic assumption}, we have $\widetilde{\alpha}(H[U^k]) \leq \widetilde{\alpha}(H) \leq 2\eta^{1/3}n$. Hence Lemma~\ref{lem: highly connecting sets} implies that for any sets $W,W'\subseteq U^k$ with $|W|,|W'| \geq \eta^{1/10}n$, at least half of vertices $w$ in $W$ satisfies $d_{H[U^k],W'}(w) \geq \eta^{-1/6}$. By  \eqref{eq: used up max degree}, we have $d_{G^*_k,W'}(w)\geq \eta^{-1/6} - \eta^{-1/60}  > 0$.
Hence, $G^*_k$ does not contain any $(\eta^{1/10}n,\eta^{1/10}n)$-bipartite holes and $\widetilde{\alpha}(G^*_k) \leq 2\eta^{1/10}n$, \ref{lem: embed one 2} holds.
Since \eqref{size1} implies that 
 $$e(\cT_{k}) \stackrel{\eqref{size1}}{\leq} \frac{1}{2rt}(1-\frac{\nu}{2})n^2 \leq 
 \frac{1}{2rt} (1-\frac{\nu}{3}) (rn_{\bullet})^2 \leq  \frac{r}{2} (1-\frac{\nu}{3})\frac{1}{t} n_{\bullet}^2
\stackrel{\eqref{eq: matching size}}{ \leq} (1-\frac{\nu}{4})\frac{|M_{k}|}{t} n_{\bullet}^2,$$
 we conclude that \ref{lem: embed one 3} holds. Hence, Lemma~\ref{lem: embed one} gives a map $\phi'$ packing $\mathcal{T}_{k}$ into $G_k^*$ satisfying $d_{\phi'(\mathcal{T}_{k})}(u)\leq \Delta \eta^{-1/80}\leq \eta^{-1/70}$.
Let $\phi_{k}:=\phi_{k-1}\cup \phi'$, then (G1)$_{k}$ holds.
Moreover, by the definition of $H_{k-1}$, $\phi_k$ packs all graphs in $\bigcup_{k'=1}^{k} \mathcal{T}_{k'}$ into $\bigcup_{k'=1}^{k}\left(G_{k'}\cup H[U^{k'}]\right)$.
By repeating this process for each $k=1,\dots, \kappa$, we obtain a function $\phi_{\kappa}$ which packs all trees in $\mathcal{T}$ into $G$. This finishes the proof of Theorem \ref{thm:main}.

\section{Proof of Lemma~\ref{lem: embed one}}
\label{sec: main lemma}

We assume that $G[V] = \bigcup_{s\in [r]} G[V_{s,1}, V_{s,2}]$ as we will not use any other edges in $V$. However, we will use some edges between $U_{s,i}$ and $U_{s',i'}$ with $s\neq s'$ which are guaranteed by \ref{lem: embed one 2}.
We may assume that $\nu <1/3$ and $\Delta\geq 2$.  
By combining two trees of order at most $\frac{2}{3}rn$ with maximum degree at most $\Delta$ into a tree with maximum degree at most $\Delta$ if necessary, we can assume that all trees in $\mathcal{T}$ has at least $\frac{2}{3}rn$ vertices except possibly one. By adding some edges to at most one tree, we may assume that  all trees in $\mathcal{T}$ have at least $\frac{2}{3}rn$ vertices and we have

\begin{align}\label{eq: cT size}
|\cT| \leq \frac{(1-\nu)r dn^2+ 2rn/3 }{ 2rn/3} +1 \leq 3n.
\end{align}

\noindent {\bf Step 1. Preparation of trees.} First, we want to partition each tree $T\in \cT$ into two forests, so that we can later embed each forest into $G$ in different ways.
For each $T\in \cT$, we choose an arbitrary vertex $r_T \in V(T)$ as a root. After applying Proposition~\ref{prop: tree partition} with $T, r_T, n, \Delta$ and $M^{-1/3} n$ playing the roles of $T, r, n, \Delta$ and $t$, respectively, we obtain a collection $\mathcal{S}_T$ of pairwise vertex-disjoint rooted subtrees such that the followings hold.
\begin{enumerate}[label=\text{\rm (S\arabic*)}]
\item \label{S1} $S \subseteq T(x)$ for every $(S, x) \in \mathcal{S}_T.$
\item \label{S2} $M^{-1/3}n \le |S| \le 2\Delta M^{-1/3} n$ for every $(S, x) \in \mathcal{S}_T$. Moreover $ \frac{1}{3\Delta}M^{1/3}r \leq |\mathcal{S}_T| \leq 2r M^{1/3}$.
\item \label{S3} $\bigcup_{(S,x) \in \mathcal{S}_T} V(S) = V(T).$
\end{enumerate} 
Now we will partition each $T\in \mathcal{T}$ into a small forest $C_T$ and a large forest $F_T$ and embed $C_T$ into $G[U]$ and $F_T$ into $G[V]$. For all $T\in \cT$ and $i\in [2]$, we let
\begin{align*}
C_T^0:= \{ x : (S,x)\in \mathcal{S}_T\}, \enspace C_T^{i}:= \bigcup_{x\in C_T^0}D_{S}^{i}(x), \enspace
C_T:= T[C_T^0\cup C_T^1\cup C_T^2] \enspace \text{and} \enspace F_T:= T \setminus V(C_T).
\end{align*}
Since $V(C_T)$ consists of roots of $(S,x) \in \mathcal{S}_T$ and their children and grandchildren in $S$, a vertex $x$ in $C_T^0$ has neighbour $y$ in $V(F_T)$ only when $y$ is a parent of $x$ in $T$ and the
vertices in $C_T^1$ has no neighbours in $V(F_T)$. 
Now we will partition $F_T$ into forests $F_T^1,\dots, F_T^r$ such that each component of $F_T$ lies entirely in one of $F_T^1,\dots, F_T^r$ and for each $s\in [r]$, each $F_T^s$ has a vertex partition $X_1^{T,s} \cup X_2^{T,s}$ satisfying the following.
\begin{enumerate}[label=\text{\rm (S\arabic*)}]
 \setcounter{enumi}{3}
 \item \label{S4} For all $T\in\cT$, $s\in [r]$ and $i\in [2]$, we have $
|X_i^{T,s}| = \frac{1}{2r} |V(T)| \pm \epsilon n \leq (1-3\nu/4) n$.
\item \label{S5} For each $(S,x)\in \mathcal{S}_T$, there exists $(s,i)\in [r]\times [2]$ such that $A_S(x)\setminus V(C_T) \subseteq X_i^{T,s}$ and $B_S(x)\setminus V(C_T) \subseteq X_{3-i}^{T,s}$.
\end{enumerate} 
To see that such a partition exists, we choose $(s,i) \in [r]\times [2]$ independently and uniformly at random for each $(S,x)\in \mathcal{S}_T$, and add $A_S(x)\setminus V(C_T)$ into $X_i^{T,s}$ and $B_S(x)\setminus V(C_T)$ into $X_{3-i}^{T,s}$. Then a simple application of Azuma's inequality shows that \ref{S4} holds with probability at least $0.9$.
Additionally, for each $(S,x)\in \cS_T$, it is clear that \ref{S5} holds. Thus, there exists a vertex partition satisfying both \ref{S4} and \ref{S5}.\vspace{0.2cm}

\noindent {\bf Step 2. Packing small forests $C_T$ into $G[U]$.}
We aim to later embed the vertices in $X_i^{T,s}$ into $V_{s,i}$.
For this, we first embed $C_T$ into $G[U]$ accordingly. Let $(S_1,x_1),\dots, (S_p,x_p)$ be an ordering of $\bigcup_{T\in \cT} \mathcal{S}_T$ such that for each $T\in \cT$, all elements of $\mathcal{S}_T$ appear consecutive in the ordering and  $(S,x)$ comes before $(S',x')$ if $x$ is an ancestor of $x'$ and $(S,x), (S',x') \in \mathcal{S}_T$ for a $T\in \cT$. For each $q \in [p]$, let 
$$C_{q}:= S_q[D^{\leq 2}_{S_{q}}(x_{q})] \enspace\text{and}\enspace W_{q}:= \bigcup_{j \in [q]} V(C_j).$$
We embed trees $C_q$ into $G[U]$ using the following claim.
\begin{claim}
For each $q\in [p]\cup \{0\}$, there exists a function $\phi_{q}$ packing $\{ C_j : j\in [q]\}$ into $G[U]$ satisfying the following for all $T\in \cT$ and $(s,i)\in [r]\times [2]$.
\begin{enumerate}[label=\text{\rm ($\Phi$\arabic*)}]
\item \label{phi1} \hspace{-0.2cm}$_{q}$ For all $y\in X_i^{T,s}$ and $x\in N_{T}(y)\cap W_q$, we have $\phi_q(x)\in U_{s,3-i}$.
\item \label{phi2} \hspace{-0.2cm}$_{q}$ For each $y\in X_i^{T,s}$ with
$N_{T}(y)\cap W_q = \{y_1,\dots, y_b\} \neq \emptyset$, we have 
$d_{G, V_{s,i}}( \phi_q(y_1),\dots, \phi_q(y_b)) \geq (d-\epsilon^{1/2})^{b} n.$

 \item \label{phi3} \hspace{-0.2cm}$_{q}$   For each vertex $u \in U$, we have 
 $|\{ x\in W_{q}: \phi_{q}(x)=u\}| \leq M$.

\end{enumerate}
\end{claim}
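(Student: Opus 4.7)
The plan is to prove the claim by induction on $q$; the base case $\phi_0 = \emptyset$ is trivial. For the inductive step, assuming $\phi_{q-1}$ satisfies $(\Phi 1)_{q-1}$--$(\Phi 3)_{q-1}$, I would extend $\phi_{q-1}$ to an embedding of $C_q$ into $G[U]$, processing the vertices in the order: root $x_q$, its children $c_1,\dots,c_k$ (with $k \le \Delta$), and then its grandchildren. Write $(s_q, i_q)$ for the pair from \ref{S5} applied to $(S_q, x_q)$ and $z := a_T(x_q)$ for the $T$-parent of $x_q$. By the chosen ordering of subtrees, $z \in V(C_T)$ forces $z \in W_{q-1}$, so $\phi_{q-1}(z)$ is already defined in that case.

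First I embed $x_q$. If $z \in V(F_T)$, say $z \in X_{i'}^{T,s'}$, then $(\Phi 1)_q$ forces $\phi_q(x_q) \in U_{s',3-i'}$; the super-regularity of $G[U_{s',3-i'},V_{s',i'}]$ (from \ref{lem: embed one 1}), together with at most $\Delta$ iterations of Proposition~\ref{prop: reg degree}, excludes only an $\epsilon^{1/2}$-fraction of candidates that would violate $(\Phi 2)_q$. If instead $z \in V(C_T)$, with $\phi_{q-1}(z) \in U_{s'',i''}$, I select $\phi_q(x_q)$ from $N_{G[U]}(\phi_{q-1}(z)) \cap U_{s'',3-i''}$, which by super-regularity of $G[U_{s'',i''},U_{s'',3-i''}]$ has size at least $(d-\epsilon)|U_{s'',3-i''}|$. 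In either case I additionally insist that the chosen image has at least $M^3$ neighbours in the target part $U_{s_q,3-i_q}$: by hypothesis \ref{lem: embed one 2} and Lemma~\ref{lem: highly connecting sets}, this rules out only $2 M^{-2} n$ further vertices. After discarding the at most $|W_{q-1}|/M$ overused images and the few vertices whose selection would repeat an edge used earlier, plenty of valid choices remain.

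Next I embed the children and grandchildren. A child $c_j \in C_T^1$ has no $V(F_T)$-neighbour and only $x_q$ as an already-embedded $T$-neighbour, so I pick $\phi_q(c_j) \in U_{s_q,3-i_q}$ among the at least $M^3$ neighbours of $\phi_q(x_q)$ obtained above. For a grandchild $g$, \ref{S5} places every future $V(F_T)$-neighbour of $g$ inside $X_{3-i_q}^{T,s_q}$, so $(\Phi 1)_q$ demands $\phi_q(g) \in U_{s_q,i_q}$; the super-regular graph $G[U_{s_q,3-i_q},U_{s_q,i_q}]$ supplies at least $(d-\epsilon)|U_{s_q,i_q}|$ neighbours of $\phi_q(c_j)$ in the target, while Proposition~\ref{prop: reg degree} applied to $G[U_{s_q,i_q},V_{s_q,3-i_q}]$ preserves $(\Phi 2)_q$ for the at most $\Delta$ future $V$-neighbours of $g$. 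A global count using \ref{S2} gives $|W_p| = O(\Delta^2 r n M^{1/3})$, whereas each $U$-part has at least $\epsilon n$ vertices, and the hierarchy $1/M \ll \epsilon$ keeps the maximum load per $U$-vertex below $M$, yielding $(\Phi 3)_q$. Setting $\phi_q := \phi_{q-1} \cup \phi'$ for the new partial embedding $\phi'$ closes the induction.

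The main difficulty lies in the second sub-case of the $x_q$-embedding: the image must sit inside one super-regular pair (to be adjacent to $\phi_{q-1}(z)$) while simultaneously having many neighbours in an unrelated pair $U_{s_q, 3-i_q}$ so that the children $c_j$ can subsequently be placed. This is precisely where the sub-linear bi-independence hypothesis \ref{lem: embed one 2} is indispensable, since without it $G[U]$ could have no edges at all between distinct super-regular pairs and consecutive subtrees of $\mathcal{S}_T$ assigned by \ref{S5} to different pairs would not be stitchable; Lemma~\ref{lem: highly connecting sets} is the key tool bridging this gap.
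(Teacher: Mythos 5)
Your inductive scaffolding (root, then children, then grandchildren of $S_q$) matches the paper's, and you have correctly identified $(\Phi 1)$, $(\Phi 2)$, $(\Phi 3)$ as the invariants, the role of \ref{S5}, and the fact that bi-independence is the only way to stitch together super-regular pairs that are not matched in $M_k$. However, you place the bi-independence step at the \emph{root-to-children} edges, whereas the paper places it at the \emph{children-to-grandchildren} edges (root $\to U_{s^*,3-i^*}$, children $\to U_{s^*,i^*}$ via the super-regular pair $G[U_{s^*,1},U_{s^*,2}]$, and then grandchildren $\to U_{s,3-i}$ via Lemma~\ref{lem: highly connecting sets}). That swap is where your argument breaks.

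The concrete gap: Lemma~\ref{lem: highly connecting sets} only guarantees $\eta^{-1/2}\approx M^{3}$ neighbours (the paper's calibration gives $M^2$); it does \emph{not} give a positive proportion of the target part. So after choosing $\phi_q(x_q)$, your candidate set for each child $c_j$ has size only $\Theta(M^3)$. But the set $U'$ of vertices already carrying $M$ preimages has size up to $|E|/M = \Theta(M^{-2/3}rn)$, which dwarfs $M^3$ once $n$ is large. Hence the $M^3$ candidates for $c_j$ could lie entirely inside $U'$, destroying $(\Phi 3)_q$ (and likewise could collide with images of earlier vertices of the same tree). The paper avoids exactly this by running Lemma~\ref{lem: highly connecting sets} into the \emph{already-cleaned} target $W' = U_{s,3-i}\setminus(U'\cup U'')$ — so the small neighbourhood it produces is automatically disjoint from the forbidden vertices — and it reserves the super-regularity step (with its $\Omega(\epsilon n)$-sized neighbourhood, big enough to subtract $|U'\cup U''|\le M^{-1/2}n$) for root-to-children. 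Your proposal, as written, uses $W'=U_{s_q,3-i_q}$ with no exclusion; the statement that the chosen image ``has at least $M^3$ neighbours in the target part $U_{s_q,3-i_q}$'' does not say those neighbours avoid $U'$. Relatedly, your concluding sentence that a ``global count \ldots keeps the maximum load per $U$-vertex below $M$'' is not a valid justification for $(\Phi 3)$: the count bounds the average load, not the maximum. The paper's proof instead maintains $(\Phi 3)$ by explicitly steering every newly embedded vertex away from $U'$, and the feasibility of doing so — not the average — is what closes the induction. To repair your write-up you would have to (a) apply Lemma~\ref{lem: highly connecting sets} with the deleted set removed before choosing the bi-independence neighbourhood, and (b) replace the averaging sentence with the avoidance argument; once you do, you essentially reproduce the paper's proof with the roles of super-regularity and bi-independence interchanged, which does also work.
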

\begin{proof}
We use induction on $q$. The statement is trivial if $q=0$. Assume $q\geq 0$ and assume we have $\phi_{q}$ satisfying \ref{phi1}$_{q}$--\ref{phi3}$_{q}$. Let $(S,x):= (S_{q+1},x_{q+1})$. 
Let $T \in \cT$ be the tree containing $S$ and let
$t\in [2 M^{1/3} r]\cup\{0\}$ be the largest number such that
$(S_{q-t+1},x_{q-t+1}),\dots, (S_{q+1},x_{q+1})$ all belong to $\cS_T$.
By \ref{S5}, we let $(s,i) \in [r]\times [2]$ be the index such that 
\begin{align}\label{eq: def of s,i}
B_S(x)\setminus V(C_{q+1})\subseteq X_i^{T,s}.
\end{align}
Let $y:= a_T(x)$, if exists. Note that, by the choice of the ordering $(S_1,x_1),\dots, (S_p,x_p)$, the vertex $y$ (if exists) belongs to one of $S_{q-t+1},\dots, S_{q}$, thus either $y\in W_q$ or $V(F_T)$.

If $y \in W_q$, then let $b:=0$. If $y\in V(F_T)$, then we let
$$\{y_1,\dots, y_b\}:=  N_{T}(y)\cap W_{q}.$$
In other words, $y_i$ is either a child of $y$ which is a root of some $(S_j,x_j)$ with $j\in [q]$ or the parent $a_T(y)$ of $y$ if $a_T(y)$ is in $W_q\cap C_T^2$. Note that $b$ could be zero.
Let $$E:= \bigcup_{j \in [q]}\phi_{q}(E(C_j)) \enspace \text{and} \enspace G':= G-E.$$
In other words, $E$ is the set of all edges in $G$ which have already been used.
Since each tree in $\cT$ has maximum degree at most $\Delta$, ($\Phi$3)$_{q}$ implies that every vertex of $G$ is incident to at most $\Delta M$ edges of $E$.
Note that \eqref{eq: cT size} and \ref{S2} imply that 
$$|E| \leq \sum_{T\in \cT} \sum_{ (S',x')\in \cS_T} |S'| \leq  3n\cdot 2rM^{1/3}\cdot 2\Delta^2= 12 M^{1/3}\Delta^2 r  n.$$
Let 
$$U':= \{ u\in U: |\{ x\in W_q: \phi_{q}(x)=u\}|=M \} \enspace \text{and} \enspace U'':= \bigcup_{j \in [q]\setminus [q-t]} \phi_{j}( V(C_j) ).$$
So $U'$ is a collection of vertices that are ``fully-used'' and $U''$ is a collection of the vertices which is an image of a vertex of the current tree $T$.  In order to obtain ($\Phi$3)$_{q+1}$ as well as to eventually make $\phi_{q+1}$ injective on each forest $F_T$, we want to avoid embedding any vertices in $C_{q+1}$ into $U'\cup U''$.
As every vertex $u\in U'$ is incident to $M$ edges in $E$, so we have $|U'|\le \frac{|E|}{M}$.
As $|C_j| \leq 2\Delta^2$, \ref{S2} implies $|U''| \leq 4\Delta^2 M^{1/3} r$. Hence,
\begin{align}\label{eq: U' U'' size}
|U'\cup U'' | \leq \frac{|E| }{M} +  4\Delta^2 M^{1/3} r\leq  \frac{ 12M^{1/3}\Delta^2 rn}{M }+  4\Delta^2 M^{1/3} r \leq  M^{-1/2} n.
\end{align}
We define $(s^*,i^*)\in [r]\times [2]$ as follows, and we aim to embed $x$ into $U_{s^*,3-i^*}$. This will later ensure ($\Phi$1)$_{q+1}$.
$$ 
(s^*,i^*) := \left\{ \begin{array}{ll} 
(s',i') & \text{ if } y\in V(C_T) \text{ and } \phi_{q}(y)\in U_{s',i'},\\
(s',i') & \text{ if } y \in X^{T,s'}_{i'}, \\
(s',i') & \text{ if } x=r_T \text{ and } D^3_S(x) \subseteq X_{i'}^{s'}.
\end{array}\right.
$$
Note that \ref{S5} ensures that $(s',i')$ exists in the third case when $x=r_T$ and $y$ is not defined.
Recall that the vertex $x$ has at most one neighbour in $V(F_T)$ (its parent $y=a_T(x)$ if belongs to $F_T$) since all of children of $x$ are either non-root vertex in $S$ or a root of some other $(S',x') \in \mathcal{S}_T$. We now define $\phi_{q+1}(x)$ depending on where $y$ lies. We consider the following three cases. \vspace{0.2cm}

\noindent {\bf Case 1.} If $y\in V(C_T)$, then  let $\phi_{q+1}(x)$ be an arbitrary vertex $u$ in $N_{G',U_{s^*,3-i^* }}(\phi_{q}(y)) \setminus (U'\cup U'')$.
By \ref{lem: embed one 1}, we have
\begin{eqnarray*}
|N_{G',U_{s^*,3-i^* }}(y) \setminus (U'\cup U'')|  & \stackrel{\eqref{eq: U' U'' size}}{\geq} & (d-\epsilon)\cdot \epsilon n - \Delta M - M^{-1/2} n \geq 1,
\end{eqnarray*}
hence such a vertex $u$ exists.
Here, we obtain the penultimate inequality from \ref{phi3}. \vspace{0.2cm}

\noindent {\bf Case 2.} If $x=r_T$ and $y$ does not exists, then let $\phi_{q+1}(x)$ be an arbitrary vertex $u$ in $U_{s^*,3-i^* } \setminus (U'\cup U'')$.
Similar argument as Case 1 shows that such a vertex $u$ exists. \vspace{0.2cm}

\noindent {\bf Case 3.} If $y\in V(F_T)$, then let
$$U_*:= U_{s^*,3-i^*} \setminus (U'\cup U'') \enspace \text{ and } \enspace V_{*}:= N_{G,V_{s^*,i^*}}(\phi_{q}(y_1),\dots, \phi_{q}(y_{b})).$$
Recall that $N_{G,V_{s^*,i^*}}(\emptyset) = V_{s^*,i^*}$.
Then we have $|U_*| \geq |U_{s^*,3-i^*}| - M^{-1/2} n\geq |U_{s^*,3-i^*}|/2$. As $y_1,\dots, y_b \in W_q$, the property $(\Phi2)_{q}$ implies
$|V_{*}|\geq(d-\epsilon^{1/2})^b n.$
Since $G[U_{s^*,3-i^*}, V_{s^*,i^*}]$ is $(\epsilon,d)$-regular, Proposition~\ref{prop: reg degree} implies that
at least $|U_*| -\epsilon^{1/2} |U_{s^*,3-i^*}| \geq |U_{s^*,3-i^*}|/3$ vertices $u$ in $U_*$ satisfy $d_{G,V_{*}}(u)\geq (d-\epsilon^{1/2})^{b+1} n$.
We define $\phi_{q+1}(x)$ to be one of such vertices, then ($\Phi$2)$_{q+1}$ holds for the vertex $y\in X_{i^*}^{T,s^*}$.
\vspace{0.2cm}

Now, we want to map vertices in $D_{S}(x)$ to $U_{s^*,i^*}$.
Recall the definition of $s$ and $i$ from \ref{eq: def of s,i}.
Let 
$$W:= N_{G', U_{s^*,i^*}}(\phi_{q+1}(x)) \setminus (U'\cup U'') \enspace \text{and} \enspace W':= U_{s,3-i}\setminus (U'\cup U'').$$
Note that, in any of three cases, \ref{lem: embed one 1} implies $d_{G, V_{s^*,i^*}}(\phi_{q+1}(x))\geq (d-\epsilon^{1/2})n$. Thus \ref{phi3}$_q$ implies that
$$|W|\geq d_{G,U_{s^*,i^*}}(\phi_{q+1}(x)) - \Delta M \geq (d-2\epsilon^{1/2})|U_{s^*,i^*}| \geq \frac{1}{2}\epsilon d n,$$
and we have $|W'|\geq \epsilon n - M^{-1/2}n \geq \frac{1}{2}\epsilon n$.
Lemma~\ref{lem: highly connecting sets} with \ref{lem: embed one 2} implies that at least $\frac{1}{3}\epsilon dn$ vertices $w$ in $W$ satisfy $d_{G,W'}(w)\geq M^2$.
We extend $\phi_{q+1}$ in such a way that $\phi_{q+1}$ maps the vertices in $D_{S}(x)$ into distinct vertices in $W$ each having at least $M^2$ neighbours in $W'$ in the graph $G$.
Since $\phi_{q}$ satisfies \ref{phi3}$_{q}$ and $G'=G-E$, for each $z\in D_{S}(x)$, we have 
$$d_{G',W'}(z) \geq  d_{G,W'}(z) - \Delta M \geq M^2 - \Delta M \geq M.$$
For each $z\in D_{S}(x)$, we define $\phi_{q+1}$ on $D_{S}(z)$ in such a ways that $\phi_{q+1}$ maps the vertices in $D_{S}(z)$ into different vertices in $N_{G',W'}(z)$ and $\phi_{q+1}$ is still injective on vertices in $S$.
This is possible as $|C_{q+1}|\leq \Delta^2 < M\leq d_{G',W'}(z)$.
By our construction, $\phi_{q+1}$ embeds $C_{q+1}$ into $G[U]-E$, thus $\phi_{q+1}$ packs $C_{1},\dots, C_{q+1}$ into $G[U]$.

Now we check that $\phi_{q+1}$ satisfies \ref{phi1}$_{q+1}$--\ref{phi3}$_{q+1}$. Note that any vertex in $V(F_T) \cap N_{T}^1(C_{q+1})$ is either $y$ or vertices in $D_{S}^3(x)$. If $y\in V(F_T)$, our choice of $\phi_{q+1}$ and the definition of $(s^*,i^*)$ ensure that \ref{phi1}$_{q+1}$ holds for the vertex $y$. As $D^{3}_{S}(x)\subseteq B_S(x)\setminus D^{\leq 2}_S \subseteq X_i^{T,s}$ and
$\phi_{q+1}(D_{S}^2(x)) \subseteq W' \subseteq U_{s,3-i}$, \ref{phi1}$_{q+1}$ holds for the vertices in $D_{S}^3(x)$. For all vertices outside $N_{T}^1(C_{q+1})$, \ref{phi1}$_{q}$ implies \ref{phi1}$_{q+1}$. Hence \ref{phi1}$_{q+1}$ holds.
Note that the definition of $\phi_{q+1}(x)$ in Case 3 ensures that \ref{phi2}$_{q+1}$ holds for $y$, if $y \in V(F_T)$. For vertices in $D_{S}^{3}(x)$, by the definition of $W'$ and \ref{lem: embed one 1}, \ref{phi2}$_{q+1}$ holds for the vertices in $D_{S}^3(x)$ with $b=1$. Again, for all vertices outside $N_{T}^1(C_{q+1})$, \ref{phi2}$_{q}$ implies \ref{phi2}$_{q+1}$, hence \ref{phi2}$_{q+1}$ holds.
Since we have not mapped any vertices into $U'$ and every vertex in $D_{S}^{\leq 2}(x)$ is injectively mapped, \ref{phi3}$_{q+1}$ holds. This finishes the induction and the proof of the claim.
\end{proof}

\noindent {\bf Step 3. Packing forests $F^s_T \subseteq F_T$ into $G[V_{s,1},V_{s,2}]$.} Let $\phi:=\phi_p$ given by the above claim. From now on, \ref{phi1}--\ref{phi3} denote \ref{phi1}$_{p}$--\ref{phi3}$_{p}$.
We wish to pack each forest $F_T$ into $G[V]$ by using Theorem~\ref{Blowup} in such a way that the vertices in $X^{T,s}_i$ embed into $V_{s,i}$, then all edges of $F_T$ lie in $\bigcup_{s\in [r]} G[V_{s,1}, V_{s,2}]$.
To use Theorem~\ref{Blowup}, we first  pack trees into internally regular bipartite graphs. 
Since $F_T^1,\dots, F_T^r$ are vertex-disjoint subforests of tree $T$, the property \ref{phi1} ensures that we can consider $F_T^s$ for each $s \in [r]$ separately to pack into $G[V_{s,1},V_{s,2}]$. 

Moreover, we want the obtained packing of $F_T$ to be consistent with $\phi$, so the neighbours of already embedded vertex $x$ of $T$ are also embedded to a neighbour of $\phi(x)$ in $G$. We will define sets $W_i^F$ and $Y_i^{s'}$ for this purpose, and we will use \ref{phi2} together with \ref{Blowup A3} to obtain this consistency. 

We choose a new integer $q$ and a constant $\zeta$ satisfying $\epsilon \ll 1/q \ll \zeta \ll d,\nu.$ 
We fix  a number $s\in [r]$ throughout Step 3, and let 
$$\cF^s:= \{ F_T^s : T\in \cT\}.$$
Because a forest contains less edges than vertices,
\ref{S4} and \ref{lem: embed one 3} imply that 
\begin{align}\label{eq: number of edges in cF}
e(\cF^s) \leq \sum_{T\in \cT} \left(\frac{1}{r} |V(T)| + 2\epsilon n \right) \leq (1-\frac{2\nu}{3}) dn^2.
\end{align}
For all $F =F_T^s \in \cF^s$ and $i\in [2]$, let $X^F_i:= X^{T,s}_i$, then $X^F_1 \cup X^F_2$ is a vertex partition of $F$ into two independent sets.
By \ref{S4}, for all $F\in \cF^s$ and $i\in [r]$, we have  $|X^{F}_i | \leq (1-3\nu/4) n$.
Let 
\begin{eqnarray}\label{w}
w := \frac{e(\mathcal{F}^s) }{(1-4\zeta)qn} \stackrel{ \eqref{eq: number of edges in cF}}{\le} \frac{(1-2\nu/3)dn^2}{(1 - 4\zeta)qn} \le \frac{d}{q} (1 - \frac{\nu}{2})n.\end{eqnarray}
We partition $\mathcal{F}^s$ into collections $\mathcal {F}_1, \dots, \mathcal {F}_w$ so that we have the following for each $s' \in [w]$.
\begin{eqnarray}\label{edgesum}
e(\cF_{s'})=\sum_{F\in \mathcal{F}_{s'}} e(F) = \frac{e(F^s)}{w} \pm 4n \leq (1 - 3\zeta)qn.
\end{eqnarray}
Note that this is possible because $e(F)\leq 2n$ for each $F\in \cF^s$.
Since each $T\in \cT$ has at least $\frac{2}{3}rn$ vertices, \ref{S4} implies that  for each $s' \in [w]$
\begin{eqnarray}\label{q} 
|\mathcal{F}_{s'}| \le \frac{e(\cF_{s'})}{\frac{1}{2r}\cdot (2rn/3)-\epsilon n}\le \frac{qn}{n/4}=4q \le (q \zeta)^{3/2}. 
\end{eqnarray} 
For each $i\in [2]$ and $F=F_T^s\in \mathcal{F}$, we define
$$W_{i}^F:= N^1_{T}( V(C_T) ) \cap X^F_i.$$
In other words, $W_i^{F}$ is the collection of the vertices which have neighbours already embedded by $\phi$, so we need a special care  when we embed the vertices in $W_i^{F}$ to make sure we embed edges of $T$ incident to the vertices in $W_i^{F}$ into edges of $G$. For all $i\in [2]$ and $F \in \mathcal{F}^s$, we have
\begin{eqnarray}\label{eq: Wi+1T size}
|W_{i}^F| \leq \Delta |V(C_T)| \leq 2\Delta^3 |\mathcal{S}_T| \stackrel{\ref{S2}}{\leq } M.
\end{eqnarray}
Note that we have $M\leq \epsilon n$.
Thus, by \eqref{edgesum}, \eqref{q} and \eqref{eq: Wi+1T size}, for each $s'\in [w]$,  we can apply Lemma~\ref{Pack} with $\cF_{s'}, X^F_{i}, W_{i}^{F}, q, \zeta, \epsilon$ and $|\cF_{s'}|$ playing the roles of $\cL, X_i^j, W^j_i, k, \zeta,\epsilon$ and $s$, respectively.
Then for each $s'\in [w]$, we obtain a function $\Phi_{s'}$ packing forests in $\cF_{s'}$ into a $2n$-vertex $q$-regular graph $H_{s'}$ with a balanced bipartition $X_1 \cup X_2$.
Moreover, for all $i\in [2]$ and $F,F'\in \cF_{s'}$, 
\begin{eqnarray}\label{eq: W disjoint}
\Phi_{s'}(W_i^{F}) \cap \Phi_{s'}(W_{i}^{F'}) = \emptyset.
\end{eqnarray} 
For all $i \in [2]$ and $s' \in [w]$, we let
$$Y_i^{s'}:= \bigcup_{F \in \mathcal{F}_{s'} } \Phi_{s'}(W_i^{F}) \text{ and } Y^{s'}:=\bigcup_{i\in [2]} Y_i^{s'}.$$
 By \eqref{q} and \eqref{eq: Wi+1T size},  for each $s' \in [w]$ and $i\in [2]$, we have
 \begin{eqnarray} \label{eq: Yij size}
 |Y_i^{s'}| \le 4q\cdot M \leq \epsilon n. 
 \end{eqnarray}
We now wish to use Theorem \ref{Blowup}  to pack $\mathcal{H}^s := \{H_1, \dots, H_w\}$ into $G[V_{s,1},V_{s,2}]$. This packing combined with $\Phi_{s'}$ would give us a packing of $F_T$ into $G$. Moreover, we want the edges of $T$ between $V(C_T)$ and $V(F_T)$ to be edge-disjointly mapped into $E(G)$. Note that the vertices in $Y_i^{s'}\subseteq X_i$ are the images of vertices that is incident to such edges between $V(C_T)$ and $V(F_T)$.
For each $y\in Y_i^{s'}$, let $x_y$ be the preimage of $y$, i.e.~$\Phi_{s'}(x_y)=y$, and let $T_y \in \mathcal{T}$ be the tree containing $x_y$. Let $N_y:= N_{T_y}(x_y)\cap V(C_{T_y})$ and 
$$ A^{s'}_y:= N_{G, V_{s,i}}( \phi(N_y) ).$$
Since a vertex $y$ is an image of $x_y$, mapping $y$ to a vertex $v$ means that $x_y$ will be embedded into $v$ in our final packing. Since $N_y$ is the set of already embedded neighbours of $x_y$, the vertex $v\in V_{s,i}$ must be a  the common neighbour (in $G$) of the vertices in $\phi(N_y)$. Therefore, $A^{s'}_y$ is the set of vertices which we can embed $y$ into. 
By \ref{phi2}, we have
\begin{align}\label{eq: target set}
|A^{s'}_y| \geq  (d-\epsilon^{1/2})^{\Delta} n.
\end{align}
In addition, there is one more issue to consider.
If there are two vertices $x\in C_T$ and $x'\in C_{T'}$ from different trees $T\neq T'$ satisfies $\phi(x)=\phi(x')$ and we have two vertices $y\in N_{T}(x)\cap F_T$ and $y'\in N_{T'}(x')\cap F_{T'}$, we cannot embed $y$ and $y'$ into the same vertex. Note that, by (\ref{eq: W disjoint}), we do not need to worry about conflicts between two vertices from different trees in the same collection $\cF_{s'}$. 
To deal with this overlapping issue for trees from different collections, we consider the following auxiliary graph  $\Gamma$ with 
$V(\Gamma) := \{(s',y): s'\in [w] \text{ and } y\in  Y^{s'} \}$ and 
$$E(\Gamma) : = \{(s',y)(s'',y')  : s'\neq s''\in [w], 
y \in Y^{s'}, y' \in Y^{s''} \text{ and } \phi(N_y) \cap \phi(N_{y'}) \neq\emptyset\}.$$

For a fixed pair $(s',y)\in V(\Gamma)$ and fixed $s''\in [w]\setminus \{s'\}$, we have
\begin{align}\label{eq: q2 upperbound}
&\hspace{-0.2cm} |\{ y' \in Y^{s''}: (s'',y') \in N_{\Gamma}( (s',y) ) \}| \leq |\{ y' : F_{T_{y'}}^s \in \cF_{s''},  \phi(N_y)\cap \phi(N_{y'})\neq \emptyset \}| \nonumber \\
& \leq   \sum_{ u\in \phi(N_y) } |\{y': F_{T_{y'}}^s\in \cF_{s''},  u\in \phi(N_{y'})  \}|
 \leq \sum_{ u\in \phi(N_y)} \sum_{F_{T}^{s} \in \cF_{s''} } |\{ x \in V(F_{T}^{s}) :
u\in \phi(N_{T}(x)\cap C_T)   \}|  \nonumber \\
& \leq   \sum_{ u\in  \phi(N_y) } \sum_{F_T^s \in \cF_{s''} } |\{ x \in N_T(x') : u= \phi(x') \}| 
\leq \sum_{ v\in  \phi(N_y)} \sum_{F_T^s \in \cF_{s''}} \Delta
\leq \Delta^2|\cF_{s''}|  
\stackrel{\eqref{q} }{\leq} \Delta^2 (\zeta q)^{3/2} \leq q^2.
\end{align}
Moreover, for any $(s',y)\in V(\Gamma)$,  we have
 \begin{align}\label{eq: Delta(F)}
d_{\Gamma}((s',y)) \leq \sum_{u\in \phi(N_y)}|\phi^{-1}(u)| \stackrel{\ref{phi3}}{\leq} \Delta M \leq \epsilon n. 
 \end{align}
Now we apply Theorem~\ref{Blowup} with the following objects and parameters. \newline

\noindent
{ 
\begin{tabular}{c|c|c|c|c|c|c|c|c|c|c|c|c|c|c}
object/parameter& $G[V_{s,1},V_{s,2}]$ & $V_{s,i}$ & $H_{s'}$&$w$ &$q$& $\epsilon$&$n$ & $d$& $(d-\epsilon^{1/2})^{\Delta}$ & $\frac{\nu}{10}$ & $\Gamma$ & $Y_i^{s'}$ & $ A_y^{s'}$ \\ \hline
playing the role of &$ G$ & $V_i$ & $H_j$ &$s$ &$k$& $\epsilon$ &$n$ & $d$ & $d_0$ & $\alpha$ & $\Gamma$ & $Y_i^j$ & $A_{y}^{j}$
\end{tabular}
}\newline \vspace{0.2cm}

Indeed, \ref{lem: embed one 1} implies that \ref{Blowup A1},  and \ref{Blowup A2} holds by the definition of $H_s$.
Properties \eqref{eq: Yij size} and \eqref{eq: target set} imply that \ref{Blowup A3} holds, and 
 the properties \eqref{eq: Yij size}, \eqref{eq: q2 upperbound}  and \eqref{eq: Delta(F)} imply that \ref{Blowup A4} holds with the above parameters.
Thus by Theorem \ref{Blowup} we obtain a function $\phi^*_{s}$ which packs $\{H_1,\dots, H_w\}$ into $G[V_{s,1},V_{s,2}]$  satisfying the following for each $s'\in [w]$ and $i\in [r]$.\begin{enumerate}[label=\text{\rm ($\Phi^*$\arabic*)}]
\item\label{Phi*1}$\phi^*_s(y) \in A_y^{s'}$ for all $y \in Y_1^{s'}\cup Y_2^{s'}$, 
\item\label{Phi*2} For all $(s',y)(s'',y') \in E(F),$ we have  $\phi^*_s(y) \ne \phi^*_s(y').$
\end{enumerate}
So $\phi^s:= \phi^*_s(\bigcup_{s'\in [w]}\Phi_{s'})$ packs $\cF^{s}$ into $G[V_{s,1}, V_{s,2}]$.\vspace{0.2cm}

\noindent {\bf Step 4. Combining the functions.} 
By repeating Step 3 for all $s\in [r]$, we obtain functions $\phi^1,\dots, \phi^{r}$ packing all forests in $\cF^{1},\dots, \cF^{r}$. Let $\phi' :=\phi\cup \bigcup_{i\in [r]} \phi^i$.
Then $\phi'$ packs every forest in $\cF^s$ into $G[V_{s,1},V_{s,2}]$, thus into $G[V]$. Since  $\phi'$ is also an extension of $\phi$,  for all $T\in \cT$, $\phi'$ packs $C_T$  into $G[U]$.
Moreover, \ref{Phi*1}, \ref{Phi*2} and the definitions of $A^{s'}_y$ and $\Gamma$ imply that $\phi'$ packs edges in
$\{e \in E(T[V(C_T), V(F_T)]) : T\in \mathcal{T}\}$
into distinct edges in $\bigcup_{(s,i)\in [r]\times [2]} G[U_{s,i}, V_{s,3-i}] $. Thus we conclude that $\phi'$ packs $\mathcal{T}$ into $G$. Moreover, \ref{phi3} implies that for each $u\in U$, we have $d_{\phi(\cT)}(u) \le\Delta\cdot M$. This finishes the proof of Lemma~\ref{lem: embed one}.

\bibliographystyle{amsplain}
\bibliography{littrees}

\medskip

{\footnotesize \obeylines \parindent=0pt
	
	\begin{tabular}{lllll}
		Jaehoon Kim	&\ & Younjin Kim	&\ &	Hong Liu        \\
		School of Mathematics &\ & Institute of Mathematical Sciences &\ & Mathematics Institute		  		 	 \\
		University of Birmingham &\ & Ewha Womans University &\ & University of Warwick 	  			 	 \\
		Birmingham &\ & Seoul &\ & Coventry                             			 \\
		UK &\ & South Korea &\ & UK						      \\
	\end{tabular}
}

\begin{flushleft}
	{\it{E-mail addresses}:
		\tt{j.kim.3@bham.ac.uk, younjinkim@ewha.ac.kr, h.liu.9@warwick.ac.uk.}}
\end{flushleft}

\end{document}